\title[Sheaves of metric structures]{Sheaves of metric structures}
\author[M. Ochoa]{Maicol A. Ochoa}
\address{Departamento de Matem\'aticas\\Universidad Nacional de Colombia\\
Carrera 45 N 26-85 - Edificio Uriel Guti\'errez\\
Bogot\'a D.C. - Colombia}
\email{maochoad@unal.edu.co}
\urladdr{}
\author[A. Villaveces]{Andr\'es Villaveces}
\address{Departamento de Matem\'aticas\\Universidad Nacional de Colombia\\
Carrera 45 N 26-85 - Edificio Uriel Guti\'errez\\
Bogot\'a D.C. - Colombia}
\email{avillavecesn@unal.edu.co}
\urladdr{}
\keywords{Nonclassical models}
\keywords{Ultraproducts and related constructions}
\newtheorem{theorem}{Theorem}[section]    
\newtheorem{lemma}[theorem]{Lemma}          
\newtheorem{proposition}[theorem]{Proposition}    
\theoremstyle{definition}
\newtheorem{definition}[theorem]{Definition}    
\newcommand{\A}{\mathfrak{A}}
\newcommand{\X}{\mathcal{X}}
\newcommand{\F}{\mathbb{F}}
\newcommand{\ket}{\rangle}
\newcommand{\bra}{\langle}
\newcommand{\ts}{\dot{-}}
\newcommand{\eqclass}[1]{[\sigma_{#1}]/_{\sim \F}}
\newcommand{\interior}{\text{\rm int}}
\newcommand{\dom}{\text{\rm dom}}
\begin{document}

\begin{abstract}
 We introduce and develop the Model Theory of Metric Sheaves. A metric sheaf
 $\A$ is defined on a topological space $X$ such that each fiber is a
 metric model. We describe the construction of the generic model as
 the quotient space of the sheaf through an appropriate filter.
 Semantics in this model is completely controlled and understood by
 the forcing rules in the sheaf.
\end{abstract}

\maketitle

\section{Introduction}\label{secintroduction}

 Ultraproducts and ultrapowers play an important role in model theory,
 ring theory, non-standard analysis and other fields in the construction of
 objects with new properties (see for example Pestov~\cite{PEST},
 Robinson \cite{ROBI} and Schoutens \cite{SCHO}). The basic theorems of
 ultraproducts are a specific case of the more general phenomenon of
 model theoretic forcing.
 On the other hand, the logic of sheaves of structures harks back, in a rather implicit
 way, to the work of Grothendieck~\cite{GROT} in his ``Kansas Paper''
 of 1958. Explicit developments of the internal logic on topoi, and of
 the connections between model theoretic forcing and various logics on
 sheaves were successively developed during the decade of 1970 by
 Carson~\cite{CARS}, Ellerman~\cite{ELLE}, Macintyre~\cite{MACI} and
 others. Specifically, Ellerman extracted a version of a
 ``ultrastalk'' theorem linking truth on a generic fiber with model
 theoretic forcing over open sets and Macintyre studied various model
 completions of theories of rings by means of specific constructions
 of sheaves of rings.

 Later, Caicedo\cite{CAIC} introduced in 1995 a more general way to
 construct models by generalizing the previous constructions to sheaves
 of first order structures over arbitrary topological spaces (with no
 Boolean constraints of any kind). In his work he both generalized the
 previous constructions (and versions of a Generic Model Theorem) due
 to Ellerman and Macintyre, and he distilled a substratum of the logic
 on topoi that has several direct applications to classical Model
 Theory.  While other
 constructions  made extensive use of the theory of topoi, Caicedo's
 presentation is substantially simpler, as it does not make explicit
 use of elements of category theory. Every model-theoretical concept
 in these sheaves can be understood by means of the topological
 properties of the sheaf and an appropriate forcing relation. Both the
 fibers and the structures of sections are endowed in a natural way
 with classical structures, but Caicedo's construction provides the
 most natural way of dealing with model theoretic forcing, and ends up
 linking forcing over points and open sets of the topological space
 with classical truth on the ``generic'' structure.

 Sheaves
 of first order structures have been further investigated by Caicedo
 \cite{CAIC2}, Forero \cite{FORE}, Montoya \cite{MONT} and the second
 author of this article (he has applied this approach to the study of
 sheaf-models of Set Theory and generalizing classical forcing over
 partially ordered sets to forcing over arbitrary topological spaces
 in constructing generic models - see~\cite{VILL}).
 
In this
 paper we present a construction that takes these ideas to the realm
 of continuous logic. In brief, we construct sheaves of metric
 structures as understood and studied in the model theory developed by
 Ben Yaacov, Berenstein, Henson and Usvyatsov \cite{HENS}. In the
 following, we assume that the reader is familiar with the results and
 concepts of metric model theory.

  Logical connectives in metric structures are continuous functions
  from $[0,1]^n$ to $[0,1]$ and the supremum and infimum play the role
  of quantifiers.  Semantics differs from that in classical structures
  by the fact that the satisfaction relation is defined on
  $\mathcal{L}$-conditions rather than on $\mathcal{L}$-formulas,
  where $\mathcal{L}$ is a metric signature. If $\phi(x)$ and
  $\psi(y)$ are $\mathcal{L}$-formulas, expressions of the form
  $\phi(x) \leq \psi(y)$, $\phi(x) < \psi(y)$, $\phi(x) \geq \psi(y)$,
  $\phi(x) > \psi(y)$ are $\mathcal{L}$-conditions. In addition, if
  $\phi$ and $\psi$ are sentences then we say that the condition is
  closed. 

 One may argue that the set of connectives is too big. However, the
 set $\mathcal{F}=\{0,1,x/2, \ts\}$, where $0$ and $1$ are taken as
 constant functions, $x/2$ is the function taking half of its input
 and $\ts$ is the truncated subtraction; is uniformly dense in the set
 of all connectives\cite{HENS}. Thus, we can restric the set of
 connectives that we use in building formulas to the set
 $\mathcal{F}$. These constitute the set of $\mathcal{F}$-restricted
 formulas.

 In section \ref{secforcing} we define the sheaf of metric structures,
 introduce pointwise and local forcing on sections and show how to
 define a metric space in some families of sections. In section
 \ref{secgenmodel} we show how to construct the metric generic model
 from a metric sheaf. We also show how the semantics of the generic
 model can be understood by the forcing relation and the topological
 properties of the base space of the sheaf. Finally, we illustrate
 some of our results by means of a simple example. 

\section[Metric sheaf and forcing]{The Metric Sheaf and Forcing}\label{secforcing}

 A  topological sheaf over $X$ is a pair $(E,p)$, where $E$ is a topological space and $p$ is a local homeomorphism from $E$ into $X$. A section $\sigma$ is a function from an open set $U$ of $X$ to $E$ such that $p \circ \sigma =Id_U$. We say that the section is global if $U=X$. Sections are determined by their images, as $p$ is their common continuous inverse function. Besides, images of sections form a basis for the topology of $E$. We will refer indistinctly to the image set of a section and the function itself. 

  In what follows we assume that a metric language $\mathcal{L}$ is given and we omit the prefix $\mathcal{L}$ when talking about $\mathcal{L}$-formulas, $\mathcal{L}$-conditions and others. 

\begin{definition}[Sheaf of metric structures]\label{metricsheafdef}
Let $\X$ be a topological space.  A sheaf of metric structures $\A$ on
$\X$ consists of:
\begin{enumerate}
\item A sheaf $\: (E, p)$ over $X$.
\item For all $x$ in $X$ we associate a metric structure\\
$(\A_x, d) = \left( E_x, \{R_i^{(n_i)}\}_x , \{f_j^{(m_j)}\}_x, \{c_k\}_x, \Delta _{R_{i,x}}, \Delta _{f_{i,x}}, d, [0, 1]\right)$,\\
 where $E_x$ is the fiber $p^{-1}(x)$ over $x$, and the following conditions hold:
\begin{enumerate}
\item $(E_x, d_x)$ is a complete, bounded metric space of diameter $1$.
\item For all $i$, $R_i^\A =\bigcup_{x \in X} R_i^{\A_x}$ is a
  continuous function according to the topology of $\bigcup_{x \in X}
  E_x^{n_j}$. 
\item For all $j$, the function $f_j^\A = \bigcup_x f_j^{\A_x}
  :\bigcup_x E_x^{m_j} \to \bigcup_x E_x$ is a continuous function
  according to the topology of $\bigcup_{x \in X} E_x^{m_j}$.
\item For all $k$, the function $c_k^\A : X \to E$, given by $c_k^\A
  (x)=c_k^{\A_x}$, is a continuous global section.
\item We define the premetric function $d^\A$ by $d^\A=\bigcup _{x \in
    X}d_x: \bigcup _{x \in X} E_x^2 \to [0, 1]$, where $d^\A$ is a
  continuous function according to the topology of $\bigcup_{x \in X} E_x^2$.
\item For all $i$, $\Delta^\A_{ R_i}=\inf_{x \in X} (\Delta^{\A_x}_{R_i})$ with the condition that $\inf_{x \in X} \Delta^{\A_x}_{R_i}(\varepsilon)>0$ for all $\varepsilon>0$. 
\item For all $j$, $\Delta^\A_{ f_j}=\inf_{x \in X} (\Delta^{\A_x}_{f_i})$ with the condition that  $\inf_{x \in X} \Delta^{\A_x}_{f_i}(\varepsilon)>0$ for all $\varepsilon>0$.
\item The set $[0,1]$ is a second sort and is provided with the usual metric.
\end{enumerate}
\end{enumerate}
\end{definition}

 The space $\bigcup_x E_x^{n}$ has as open sets the image of sections given by $\bra \sigma_1, \dots, \sigma_n \ket= \sigma_1 \times \dots \times \sigma_n \cap \bigcup_x E_x^{n}$. These are the sections of a sheaf over $X$ with local homeomorphism $p^*$ defined by $p^*\bra \sigma_1(x), \dots, \sigma_n(x) \ket= x$. We drop the symbol $^*$ from our notation when talking about this local homeomorphism but it must be clear that this local homeomorphism differs from the function $p$ used in the definition of the topological sheaf.

The induced function $d^\A$ is not necessarily a metric nor a
pseudometric. Thus, we cannot expect the sheaf just defined to be a metric structure, in the sense of continuous logic. Indeed, we want to build the local semantics on the sheaf so that for a given sentence $\phi$, if $\phi$ is true at some $x \in X$, then we can find a neighborhood $U$ of
$x$ such that for every $y$ in $U$, $\phi$ is also true. In order to
accomplish this task, first note that semantics in
continuous logic is not defined on formulas but on conditions. Since
the truth of the condition ``$\phi<\varepsilon$'' for $\varepsilon$
small can be thought as a good approximation to the notion of $\phi$
being true in a first order model, one may choose this as the
condition to be forced in our metric sheaf.  Therefore, for a given
real number $\varepsilon \in (0,1)$, we consider conditions of the
form $\phi < \varepsilon$ and $\phi > \varepsilon$. Our first result
comes from investigating to what extent the truth in a fiber
``spreads'' onto the sheaf.
 
\begin{lemma}[Truth continuity in restricted cases]\label{conti}
\begin{itemize}
\item Let $\varepsilon$ be a real number, $x \in X$, $\phi$ an $\mathcal{L}$-formula composed only of the logical metric connectives and the quantifier $\inf$.  If $\A_x \models \phi (\sigma(x)) < \varepsilon$, then there exists an open neighborhood $U$ of $x$, such that for every $y$ in $U$, $\A_y \models \phi(\sigma(y)) < \varepsilon$.
\item Let $\varepsilon$ be a real number, $x \in X$, $\phi$ an $\mathcal{L}$-formula composed only of the logical metric connectives and the quantifier $\sup$. If $\A_x \models \phi (\sigma(x)) > \varepsilon$, then there exists an open neighborhood $U$ of $x$, such that for every $y$ in $U$, $\A_y \models \phi(\sigma(y)) > \varepsilon$.
\end{itemize}
\end{lemma}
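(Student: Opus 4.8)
The plan is to argue by induction on the structure of $\phi$, proving the two bullets simultaneously as mirror images of one another.

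First I would dispose of terms. By a routine induction on term complexity, using conditions (c) and (d) of Definition~\ref{metricsheafdef} — constants are continuous global sections, and each $f_j^{\A}$ is continuous and sends a tuple of sections over a common open set to a section over that same set — one gets: for every term $t(\bar v)$ and every tuple $\bar\sigma$ of sections with common domain $U$, the assignment $y\mapsto t^{\A_y}(\bar\sigma(y))$ is again a continuous section on $U$. (The point is that $p$ restricted to the image of a product section is a homeomorphism onto its base, so a "continuous fibrewise map" upgrades to a "continuous operation on sections".)

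Base case, $\phi$ atomic: then $\phi$ is $R_i(\bar t)$ or $d(t_1,t_2)$, the inner terms give sections by the previous step, and conditions (b) and (e) say precisely that $R_i^{\A}$ and $d^{\A}$ are continuous on $\bigcup_x E_x^{n_i}$ and on $\bigcup_x E_x^{2}$. Composing, $y\mapsto\phi^{\A_y}(\bar\sigma(y))$ is a continuous map into $[0,1]$; hence \emph{both} $\{y:\phi^{\A_y}(\bar\sigma(y))<\varepsilon\}$ and $\{y:\phi^{\A_y}(\bar\sigma(y))>\varepsilon\}$ are open, so each is a neighbourhood of any of its points. For the connective clauses, a continuous connective (equivalently, one built from the dense generators $0,1,x/2,\ts$) applied to maps that depend continuously on $y$ again depends continuously on $y$; so as long as every quantifier occurring is an $\inf$ — so that no "existential behaviour" gets flipped into "universal behaviour" — the $<\varepsilon$ set stays open. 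This is exactly why the hypothesis is restricted: a connective such as $1\ts\psi$ turns an $\inf$ inside $\psi$ into a $\sup$, and $\{\,1\ts\psi<\varepsilon\,\}$ need not be open from $\inf$-side information about $\psi$ alone.

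The heart of the argument is the quantifier step. Let $\phi(\bar v)=\inf_w\psi(w,\bar v)$ with $\psi$ in the $\inf$-fragment, and suppose $\A_x\models\phi(\bar\sigma(x))<\varepsilon$. Since the value is an infimum, there is a fibre element $a\in E_x$ with $\A_x\models\psi(a,\bar\sigma(x))<\varepsilon$. Now I invoke the standing fact that images of sections form a basis for the topology of $E$: choose a section $\rho$ with $\rho(x)=a$, and shrink domains so that $\rho$ and all the $\sigma_i$ are defined on a single open $V\ni x$. The induction hypothesis, applied to $\psi$ together with the sections $\rho,\bar\sigma$ over $V$, yields an open $U$ with $x\in U\subseteq V$ and $\A_y\models\psi(\rho(y),\bar\sigma(y))<\varepsilon$ for every $y\in U$. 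Then for each $y\in U$,
\[
\phi^{\A_y}(\bar\sigma(y))=\inf_{b\in E_y}\psi^{\A_y}(b,\bar\sigma(y))\le\psi^{\A_y}(\rho(y),\bar\sigma(y))<\varepsilon,
\]
i.e. $\A_y\models\phi(\bar\sigma(y))<\varepsilon$. The second bullet is the same argument verbatim, with $\inf$ replaced by $\sup$, "$<$" by "$>$", and the displayed inequality reversed.

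I expect the main obstacle to be this quantifier step, for two reasons: one must extract the witness $a$ not as a bare fibre element but as the value of a genuine section (this is where the basis-of-sections hypothesis on the sheaf is indispensable), and one must carry the induction with the enlarged tuple of sections so that the hypothesis is actually available for $\psi$. A close second is the connective clause: one has to check that staying inside the one-sided ($\inf$-only, respectively $\sup$-only) fragment really does prevent openness from being destroyed — which is the same phenomenon that makes the unrestricted "truth continuity" statement fail.
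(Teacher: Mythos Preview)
Your argument follows the same line as the paper's—continuity for the atomic and connective cases, then induction—but is strictly more complete: the paper's proof only writes out the quantifier-free part (``a formula constructed inductively only from connectives and atomic formulas is a composition of continuous functions and therefore continuous'') and never addresses the $\inf$/$\sup$ step, whereas you supply it correctly via a witnessing section and the basis-of-sections property.

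One remark on the connective clause. Your phrasing ``a continuous connective applied to maps that depend continuously on $y$ again depends continuously on $y$'' is fine for the quantifier-free build-up, but once an $\inf$ has been taken the map $y\mapsto\psi^{\A_y}(\bar\sigma(y))$ is only upper semicontinuous, not continuous, so a general connective need not preserve openness of the $<\varepsilon$ set. You noticed this yourself (the $1\ts\psi$ example), and you are right that it is a genuine tension in the \emph{statement} rather than in your proof: with unrestricted connectives one can encode $\sup$ via $1\ts\inf(1\ts\cdot)$, so the restriction to the $\inf$-fragment would be vacuous. The paper's own proof sidesteps this by effectively treating only the quantifier-free core; your explicit witness argument for the quantifier clause is the piece that is actually doing work beyond what the paper wrote.
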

\begin{proof}
 In atomic cases, use the fact that $d^\A$ and $R^\A$ are continuous with respect to the topology defined by sections.  For logical connectives this is a simple consequence of the fact that every connective is a continuous function. Thus, a formula $\phi(x_1, ..., x_2)$ constructed inductively only from connectives and atomic formulas is a composition of continuous functions and therefore continuous. If $\A_x \models \phi(\sigma_1(x), \dots, \sigma_n(x))< \varepsilon$, then $p\left( \bra \sigma_1, 
\dots, \sigma_2 \ket \cap \phi ^{-1} [0, \varepsilon)\right)$ is an open set in $X$ satisfying the condition that for all $y$ in it $\A_y \models \phi(\sigma_1(y), \dots, \sigma_n(y))< \varepsilon$.
\end{proof}

  In particular, the above lemma is true for $\mathcal{F}$-restricted sentences. We may consider a different proof by induction using the density of the reals. This alternative approach provides a better setting to define the point-forcing relation on conditions.

\begin{definition}[Point Forcing]
Given a metric sheaf $\; \A$ defined on some topological space $X$, and a real number $\varepsilon \in (0,1)$ we define the relation $\Vdash _x$ on the set of conditions $\phi < \varepsilon$ and $\phi > \varepsilon$ ($\phi$ an $\mathcal{L}$-sentence) and for an element $x \in X$ by induction as follows 

Atomic formulas
\begin{itemize}
\item $\A \Vdash _x d(\sigma_1, \sigma_2)<\varepsilon
\iff d_x(\sigma_1(x), \sigma_2(x))<\varepsilon
$
\item  
$
\A \Vdash _x d(\sigma_1, \sigma_2)>\varepsilon \iff d_x(\sigma_1(x), \sigma_2(x))>\varepsilon $
\item
$
\A \Vdash _x R(\sigma_1, \dots, \sigma_n)< \varepsilon 
\iff R^{\A_x}(\sigma_1(x), \dots, \sigma_n(x)) < \varepsilon
$
\item
$
\A \Vdash _x R(\sigma_1, \dots, \sigma_n)> \varepsilon \iff R^{\A_x}(\sigma_1(x), \dots, \sigma_n(x)) > \varepsilon$
\end{itemize}

Logical connectives
\begin{itemize}
\item
$
\A \Vdash _x \max(\phi, \psi)<\varepsilon
\iff \A \Vdash _x \phi < \varepsilon $ and $ \A \Vdash _x \psi <\varepsilon
$
\item
$
\A \Vdash _x \max(\phi, \psi)>\varepsilon 
\iff \A \Vdash _x \phi > \varepsilon $ or $ \A \Vdash _x \psi > \varepsilon
$
\item
$
\A \Vdash _x \min(\phi, \psi) < \varepsilon
\iff \A \Vdash _x \phi < \varepsilon $ or $ \A \Vdash _x \psi < \varepsilon
$
\item
$
\A \Vdash _x \min(\phi, \psi) > \varepsilon
\iff \A \Vdash _x \phi > \varepsilon $ and $ \A \Vdash _x \psi > \varepsilon
$
\item
$
\A \Vdash _x 1 \ts \phi < \varepsilon
\iff \A \Vdash _y \phi > 1 \dot{-} \varepsilon 
$
\item
$
\A \Vdash _x 1 \ts \phi > \varepsilon
\iff \A \Vdash _y \phi < 1 \dot{-} \varepsilon 
$
\item $\A \Vdash _x \phi \ts \psi < \varepsilon \iff $One of the following holds:\\
i) $\A \Vdash_x \phi < \psi $\\
\hspace{2.5cm} ii) $\A \nVdash_x \phi < \psi$ and $\A \nVdash_x \phi > \psi$\\
\hspace{2.5cm} iii) $\A \Vdash_x \phi > \psi $ and $ \A \Vdash_x \phi < \psi + \varepsilon$
\item $\A \Vdash _x \phi \ts \psi > \varepsilon \iff \A \Vdash _x \phi > \psi + \varepsilon$
\end{itemize}

Quantifiers
\begin{itemize}
\item
$
\A \Vdash _x \inf _\sigma \phi(\sigma) < \varepsilon
\iff $There exists a section $\mu$ such that $ \A \Vdash _x \phi (\mu )< \varepsilon$.
\item
$
\A \Vdash _x \inf _\sigma \phi(\sigma) > \varepsilon
\iff $
There exists an open set $U \ni x$ and a real number $\delta_x>0$ such that for every $y \in U$ and every section $\mu$ defined on $y$, $ \A \Vdash _y \phi (\mu )> \varepsilon + \delta_x$
\item
$
\A \Vdash _x \sup _\sigma \phi(\sigma) <\epsilon 
\iff 
$There exists an open set $U \ni x$ and a real number $\delta_x$ 
such that for every $y \in U$ and every section $\mu$ defined on $y$
 $\A \Vdash_y \phi(\mu) < \varepsilon - \delta_x$.
\item
$
\A \Vdash _x \sup _\sigma \phi(\sigma) >\epsilon 
\iff $ There exists a section $\mu$ defined on $x$ such that $
 \A \Vdash_x \phi(\mu) > \varepsilon $
\end{itemize}
\end{definition}

The above definition and the previous lemma lead to the equivalence between $\A \Vdash _x \inf _\sigma (1 \ts \phi) > 1 \ts \varepsilon $ and $\A \Vdash _x \sup _\sigma \phi < \varepsilon$. In addition, we can state the truth continuity lemma for the forcing relation on sections as follows.

\begin{lemma}
  Let $\phi(\sigma)$ be an $\mathcal{F}-$restricted formula. Then 
  \begin{enumerate}
  \item $\A \Vdash _x \phi(\sigma) < \varepsilon$ iff there exists $U$ open neighborhood of $x$ in $X$ such that  $\A \Vdash _y \phi(\sigma) < \varepsilon$ for all $y \in U$.
\item $\A \Vdash _x \phi(\sigma) > \varepsilon$ iff there exists $U$ open neighborhood of $x$ in $X$ such that  $\A \Vdash _y \phi(\sigma) > \varepsilon$ for all $y \in U$.
  \end{enumerate}
 \end{lemma}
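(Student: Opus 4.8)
The plan is to prove both clauses simultaneously by induction on the complexity of the $\mathcal{F}$-restricted formula $\phi(\sigma)$, running through the cases of the point-forcing definition; clauses (1) and (2) are symmetric, so I describe only (1). The implication from right to left is trivial — if $\A\Vdash_y\phi(\sigma)<\varepsilon$ holds for all $y$ in an open neighbourhood $U$ of $x$, then in particular it holds at $y=x$ — so the entire content is the forward implication, i.e.\ that forcing $\phi(\sigma)<\varepsilon$ at the single point $x$ already forces it throughout some neighbourhood. For the base case $\phi$ atomic, say $\phi=d(\sigma_1,\sigma_2)$ or $\phi=R(\sigma_1,\dots,\sigma_n)$, the condition $\A\Vdash_x\phi(\sigma)<\varepsilon$ says that $d^\A$ (resp.\ $R^\A$) takes a value in $[0,\varepsilon)$ at the point $\langle\sigma_1,\dots,\sigma_n\rangle(x)$; since these maps are continuous for the topology generated by sections (Definition~\ref{metricsheafdef}), the preimage of $[0,\varepsilon)$ is open, and intersecting it with the basic open set $\langle\sigma_1,\dots,\sigma_n\rangle$ and pushing forward by the local homeomorphism produces the required neighbourhood $U$. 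This is exactly the atomic step of Lemma~\ref{conti}.

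For the propositional connectives the inductive step is routine bookkeeping with neighbourhoods. The condition $\max(\phi,\psi)<\varepsilon$ is forced at $x$ iff both $\phi<\varepsilon$ and $\psi<\varepsilon$ are, so one intersects the two neighbourhoods given by the inductive hypothesis; $\max(\phi,\psi)>\varepsilon$ is a disjunction at $x$, and one simply transports the neighbourhood furnished for whichever disjunct holds; the $\min$ cases are dual. The clause $1\ts\phi<\varepsilon$ is equivalent to $\A\Vdash_x\phi>1\ts\varepsilon$, to which one applies clause (2) of the inductive hypothesis for $\phi$, and symmetrically for $1\ts\phi>\varepsilon$. The remaining connectives of the uniformly dense family built on $\{0,1,x/2,\ts\}$ are monotone and continuous, so the associated conditions reduce, after rescaling $\varepsilon$, to instances of the inductive hypothesis applied to subformulas.

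The quantifier clauses split into an ``existential'' pair and a ``uniform'' pair. For $\A\Vdash_x\inf_\sigma\phi<\varepsilon$ (and, symmetrically, $\A\Vdash_x\sup_\sigma\phi>\varepsilon$) the forcing definition provides a single section $\mu$, defined on some open $V\ni x$, with $\A\Vdash_x\phi(\mu)<\varepsilon$; applying the inductive hypothesis to $\phi(\mu)$ yields an open $W\ni x$ on which $\mu$ keeps witnessing the condition, and $U=V\cap W$ does the job. For $\A\Vdash_x\inf_\sigma\phi>\varepsilon$ (and $\A\Vdash_x\sup_\sigma\phi<\varepsilon$) no induction is needed at all: the forcing definition already hands us an open set $U\ni x$ together with a slack $\delta_x>0$ such that the defining inequality holds for every $y\in U$ and every section over $y$, and the very same pair $(U,\delta_x)$ witnesses the clause at each point of $U$.

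The one genuinely delicate case — and the place I expect the main work — is $\A\Vdash_x\phi\ts\psi<\varepsilon$, whose forcing clause is the three-fold alternative: (i) $\A\Vdash_x\phi<\psi$; (ii) $\A\nVdash_x\phi<\psi$ and $\A\nVdash_x\phi>\psi$; (iii) $\A\Vdash_x\phi>\psi$ and $\A\Vdash_x\phi<\psi+\varepsilon$. Alternatives (i) and (iii) are ``open'': reading the comparisons $\phi<\psi$, $\phi>\psi$, $\phi<\psi+\varepsilon$ through the density of the rationals as conjunctions of conditions of the forms $\phi<q$, $\psi>q$ already within the scope of the forcing relation, and applying the inductive hypothesis to those, gives a neighbourhood of $x$ on which the same alternative persists. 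Alternative (ii), by contrast, says precisely that $\phi$ and $\psi$ have the same value at $x$, which is a closed condition and need not survive on any neighbourhood; so one cannot argue case by case. The resolution is to treat the disjunction (i)$\vee$(ii)$\vee$(iii) as a whole, observing that it is equivalent to the single strict inequality $\phi<\min\{\psi+\varepsilon,1\}$: if the common value at $x$ is some $r$, pick a rational $q$ with $r<q<r+\varepsilon$ and spread $\phi<q$ together with $\psi>q\ts\varepsilon$ by the inductive hypothesis, so that on the resulting neighbourhood at least one of (i)--(iii) holds at every point (with a harmless separate treatment of the boundary situation in which $\phi$ or $\psi$ equals $1$). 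The care required here — reconciling the rational reading of $\phi<\psi$ with the inductive hypothesis, and propagating through it the $\delta$-slacks introduced by the quantifier clauses — is the crux of the argument; it is precisely because the lemma is already phrased in the uniform ``holds on a whole neighbourhood'' form that it can serve as its own inductive hypothesis at this step.
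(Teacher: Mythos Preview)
Your proposal is correct and follows precisely the approach the paper has in mind: the paper does not write out a proof of this lemma at all, but immediately before the point-forcing definition it says that Lemma~\ref{conti} admits ``a different proof by induction using the density of the reals,'' and that this alternative approach is what motivates the forcing clauses; the lemma is then stated as the forcing-language version of that induction. Your case analysis---atomic via continuity of $d^\A$ and $R^\A$, connectives by intersecting neighbourhoods, the ``existential'' quantifier clauses via a witnessing section plus the inductive hypothesis, the ``uniform'' quantifier clauses already built into the definition, and $\phi\ts\psi$ handled by reading the three alternatives as the single open condition $\phi<\psi+\varepsilon$ through rational interpolation---is exactly this induction made explicit, and your identification of the $\ts$ case (ii) as the only subtle point is apt. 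One small remark: your treatment of that case tacitly uses that forcing assigns a well-defined ``value'' to $\phi$ at $x$ (so that a common $r$ exists), which is the content of the Proposition stated just after the lemma; in a fully rigorous write-up you would fold that statement into the same simultaneous induction, as your closing sentence already suggests.
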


We can also define the point-forcing relation for non-strict inequalities by
\begin{itemize}
\item $\A \Vdash _x \phi \leq \varepsilon$ iff $\A \nVdash _x \phi > \varepsilon $ and
\item $\A \Vdash _x \phi \geq \varepsilon$ iff $\A \nVdash _x \phi < \varepsilon $,
\end{itemize}
for $\mathcal{F}-$restricted formulas. This definition allows us to show the following proposition.

\begin{proposition}
 Let $0<\varepsilon' < \varepsilon$ be real numbers. Then
 \begin{enumerate}
 \item If $\A \Vdash_x \phi(\sigma) \leq \varepsilon '$ then $\A \Vdash_x \phi(\sigma) < \varepsilon$.
\item If $\A \Vdash_x \phi(\sigma) \geq \varepsilon$ then $\A \Vdash_x \phi(\sigma) > \varepsilon '$.
 \end{enumerate}
\end{proposition}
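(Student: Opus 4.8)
The plan is to extract from parts (1) and (2) a single symmetric statement and prove it by induction on the complexity of $\phi$. Let $(\ast)$ denote the assertion: for every $\mathcal{F}$-restricted formula $\phi$, every $x\in X$, every tuple $\sigma$ of sections defined at $x$, and all reals $0<\varepsilon'<\varepsilon<1$, either $\A\Vdash_x\phi(\sigma)<\varepsilon$ or $\A\Vdash_x\phi(\sigma)>\varepsilon'$. (We take $\varepsilon,\varepsilon'\in(0,1)$, the range in which point forcing is defined.) Granting $(\ast)$, the Proposition falls out of the definitions of the non-strict relations: for (1), $\A\Vdash_x\phi(\sigma)\le\varepsilon'$ is by definition $\A\nVdash_x\phi(\sigma)>\varepsilon'$, so the first disjunct of $(\ast)$ must hold; for (2), $\A\Vdash_x\phi(\sigma)\ge\varepsilon$ is $\A\nVdash_x\phi(\sigma)<\varepsilon$, so the second disjunct holds. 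Thus the whole argument reduces to establishing $(\ast)$.

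For $\phi$ atomic the point-forcing clauses turn $(\ast)$ into the triviality that a real number $r\in[0,1]$ --- namely $d_x(\sigma_1(x),\sigma_2(x))$ or $R^{\A_x}(\sigma_1(x),\dots,\sigma_n(x))$ --- satisfies $r<\varepsilon$ or $r>\varepsilon'$ as soon as $\varepsilon'<\varepsilon$. For $\max$ and $\min$ one simply unwinds the clauses: if $\A\nVdash_x\max(\psi_1,\psi_2)>\varepsilon'$ then $\A\nVdash_x\psi_i>\varepsilon'$ for $i=1,2$, whence $(\ast)$ applied to each $\psi_i$ gives $\A\Vdash_x\psi_i<\varepsilon$, so $\A\Vdash_x\max(\psi_1,\psi_2)<\varepsilon$; the $\min$ case is dual. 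For $1\ts\psi$ one uses $\A\Vdash_x 1\ts\psi<\varepsilon\iff\A\Vdash_x\psi>1\ts\varepsilon$ and $\A\Vdash_x 1\ts\psi>\varepsilon'\iff\A\Vdash_x\psi<1\ts\varepsilon'$, and applies $(\ast)$ to $\psi$ at the complementary pair $1-\varepsilon<1-\varepsilon'$, which again lies in $(0,1)$. The connective $\phi\ts\psi$ needs more attention, since its clauses are phrased through the auxiliary conditions $\phi<\psi$, $\phi>\psi$ and $\phi<\psi+\varepsilon$; here one first records the analogous monotonicity for those auxiliary relations and then matches the three-way disjunction defining $\A\Vdash_x\phi\ts\psi<\varepsilon$ against the single clause defining $\A\Vdash_x\phi\ts\psi>\varepsilon'$.

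The main obstacle is the quantifier step, where the two sides of $(\ast)$ have very different shapes. For $\inf_\mu\psi(\mu)$ the condition $\A\Vdash_x\inf_\mu\psi(\mu)<\varepsilon$ asks only for one witnessing section at the single point $x$, whereas $\A\Vdash_x\inf_\mu\psi(\mu)>\varepsilon'$ demands a margin $\delta_x>0$ and a neighborhood $U$ of $x$ over which \emph{every} section at \emph{every} point exceeds $\varepsilon'+\delta_x$; for $\sup$ the roles are exchanged. To drive $(\ast)$ through I would assume the existential side fails, apply the inductive hypothesis to $\psi$ at a pair with a little slack (say some $\varepsilon''\in(\varepsilon',\varepsilon)$) to obtain a strict pointwise bound, and then invoke the truth-continuity lemma for $\Vdash$ stated just above to propagate that bound from $x$ to a neighborhood, absorbing the slack into the required $\delta_x$. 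The delicate point --- the one I expect to be genuinely hard --- is reconciling this point-to-neighborhood transfer with the universal quantification over \emph{all} sections over nearby points in the $\A\Vdash_x\inf_\mu\psi(\mu)>\varepsilon'$ and $\A\Vdash_x\sup_\mu\psi(\mu)<\varepsilon$ clauses: a section living only near a point $y$ close to $x$ need not extend to a section near $x$, so one must shrink the neighborhood inside the relevant domains and use the sheaf structure with care. Everything outside this quantifier step is routine bookkeeping against the inductive clauses.
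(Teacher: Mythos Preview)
Your approach is the same as the paper's: the paper's entire proof is the single sentence ``By induction on the complexity of formulas.'' Your reformulation of the two items into the single dichotomy $(\ast)$ is a clean way to organise that induction, and your treatment of the atomic and propositional steps is correct and more explicit than anything the paper offers.

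The difficulty you flag in the quantifier step is genuine, and the paper does nothing to address it. Concretely, for $\phi=\inf_\mu\psi(\mu)$, assuming $\A\nVdash_x\inf_\mu\psi(\mu)<\varepsilon$ and applying the inductive hypothesis gives $\A\Vdash_x\psi(\mu)>\varepsilon''$ for every section $\mu$ defined \emph{at $x$}; but the clause for $\A\Vdash_x\inf_\mu\psi(\mu)>\varepsilon'$ demands a uniform bound over \emph{all sections at all points of some neighbourhood}, including sections whose domains need not contain $x$. The truth-continuity lemma only propagates a bound for a \emph{fixed} section $\mu$ to a $\mu$-dependent neighbourhood, so it does not by itself produce the required uniform $U$ and $\delta_x$. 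Your diagnosis of this as the delicate point is exactly right; what you propose (introduce slack $\varepsilon''$, then spread via truth continuity) is the natural move, but the passage from ``for each $\mu$ there is a neighbourhood'' to ``one neighbourhood works for all $\mu$'' is precisely the gap, and neither you nor the paper closes it. In short: your proof is at least as complete as the paper's, and you have correctly located where both arguments are thin.
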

\begin{proof}
  By induction on the complexity of formulas.
\end{proof}

 The fact that sections may have different domains brings additional difficulties to the problem of defining a metric function with the triangle inequality holding for an arbitrary triple. However, we do not need to consider the whole set of sections of a sheaf but only those whose domain is in a filter of open sets (as will be evident in the construction of the ``Metric Generic Model'' below). One may consider a construction of such a metric by defining the ultraproduct and the ultralimit for an ultrafilter of open sets. However, the ultralimit may not be unique since $E$ is not always a compact set in the topology defined by the set of sections. In fact, it would only be compact if every fiber were finite. Besides, it may not be the case that the ultraproduct is complete. Thus, we proceed in a different way by observing that a pseudometric can be defined for the set of sections with domain in a given filter.

\begin{lemma}
Let $\mathbb{F}$ be a filter of open sets. For all sections $\sigma$ and $\mu$ with domain in $\mathbb{F}$, let the family $\mathbb{F}_{\sigma \mu}=\{U\cap \dom(\sigma)\cap \dom(\mu) | U \in \F\}$. Notice that $\phi \notin \F_{\sigma \mu}$. Then the function
\[
\rho_{\mathbb{F}}(\sigma, \mu)=\inf_{U \in \F _{\sigma \mu}} \;\sup_{x \in U} d_x (\sigma(x), \mu(x))
\]
is a pseudometric in the set of sections $\sigma$ such that $\dom(\sigma ) \in \mathbb{F}$.
\end{lemma}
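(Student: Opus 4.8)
The plan is to verify the three pseudometric axioms, after a short preliminary check that $\rho_{\F}$ is well defined. First I would observe that $\F_{\sigma\mu}$ is directed downward: since $\F$ is closed under finite intersections and $\dom(\sigma),\dom(\mu)\in\F$, given $U_1,U_2\in\F$ the set $(U_1\cap U_2)\cap\dom(\sigma)\cap\dom(\mu)$ belongs to $\F_{\sigma\mu}$ and is contained in both $U_1\cap\dom(\sigma)\cap\dom(\mu)$ and $U_2\cap\dom(\sigma)\cap\dom(\mu)$. Moreover $\emptyset\notin\F_{\sigma\mu}$ because $\dom(\sigma)\cap\dom(\mu)\in\F$ and $\F$ is proper, so each inner supremum $\sup_{x\in U}d_x(\sigma(x),\mu(x))$ is taken over a nonempty set and, since every fiber is a metric space of diameter $1$, lies in $[0,1]$. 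Hence $\rho_{\F}(\sigma,\mu)$ is a well-defined element of $[0,1]$.

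Reflexivity and symmetry are then immediate. For reflexivity, $d_x(\sigma(x),\sigma(x))=0$ for all $x$, so every inner supremum over $\F_{\sigma\sigma}$ is $0$ and thus $\rho_{\F}(\sigma,\sigma)=0$. Symmetry follows from $\F_{\sigma\mu}=\F_{\mu\sigma}$ together with the symmetry of each fiber metric $d_x$.

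The substantive step is the triangle inequality $\rho_{\F}(\sigma,\nu)\le\rho_{\F}(\sigma,\mu)+\rho_{\F}(\mu,\nu)$ for sections $\sigma,\mu,\nu$ with domains in $\F$. Fix $\eta>0$. By definition of the infimum I would choose $U=U'\cap\dom(\sigma)\cap\dom(\mu)\in\F_{\sigma\mu}$ with $U'\in\F$ such that $\sup_{x\in U}d_x(\sigma(x),\mu(x))<\rho_{\F}(\sigma,\mu)+\eta/2$, and $V=V'\cap\dom(\mu)\cap\dom(\nu)\in\F_{\mu\nu}$ with $V'\in\F$ such that $\sup_{x\in V}d_x(\mu(x),\nu(x))<\rho_{\F}(\mu,\nu)+\eta/2$. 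Then put $W=U'\cap V'\cap\dom(\sigma)\cap\dom(\mu)\cap\dom(\nu)$. Since $\F$ is a filter containing $U',V',\dom(\sigma),\dom(\mu),\dom(\nu)$, we get $W\in\F$; and because $W\subseteq\dom(\sigma)\cap\dom(\nu)$ we have $W=W\cap\dom(\sigma)\cap\dom(\nu)$, so $W\in\F_{\sigma\nu}$. For each $x\in W$ we have $x\in U$ and $x\in V$, so the fiberwise triangle inequality gives
\[
d_x(\sigma(x),\nu(x))\le d_x(\sigma(x),\mu(x))+d_x(\mu(x),\nu(x))<\rho_{\F}(\sigma,\mu)+\rho_{\F}(\mu,\nu)+\eta .
\]
Taking $\sup_{x\in W}$ and then $\inf$ over $\F_{\sigma\nu}$ yields $\rho_{\F}(\sigma,\nu)\le\rho_{\F}(\sigma,\mu)+\rho_{\F}(\mu,\nu)+\eta$, and letting $\eta\to 0$ finishes the argument.

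I expect the only real obstacle to be the bookkeeping with domains in this last step: one must build the common refinement $W$ inside $\F$ (using that all three domains are themselves in $\F$) and check that it simultaneously refines $U$ and $V$ and witnesses membership in $\F_{\sigma\nu}$; once that is set up, everything reduces to the fiberwise metric axioms and the downward directedness established at the start. I would also remark, to justify the word \emph{pseudometric}, that $\rho_{\F}(\sigma,\mu)=0$ does not force $\sigma$ and $\mu$ to coincide on any fixed open set of $\F$ but only to agree arbitrarily well along the filter, so distinct sections can be at $\rho_{\F}$-distance $0$.
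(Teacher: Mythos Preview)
Your argument is correct and follows essentially the same route as the paper: the paper only writes out the triangle inequality, choosing witnesses $W'$ and $W''$ within $\varepsilon/2$ of the two infima and intersecting them, exactly as you do with $U$, $V$ and the refinement $W$. Your version is in fact more careful about the domain bookkeeping and also spells out well-definedness, reflexivity and symmetry, which the paper leaves implicit.
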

\begin{proof}
 We prove the triangle inequality. Let $\sigma_1$, $\sigma_2$ and $\sigma_3$ be sections with domains in $\mathbb{F}$, and let $V$ be the intersection of their domains. Then it is true that
\[
\sup_{x \in V} d_x(\sigma_1(x), \sigma_2(x)) \leq \sup_{x \in V} d_x(\sigma_1(x), \sigma_3(x))+\sup_{x \in V} d_x(\sigma_3(x), \sigma_2(x)),
\] 
and since $\sup_{ x \in A} f(x) \leq \sup_{ x \in B} f(x)$ whenever $A \subset B$,
 \begin{equation*}
\inf_{W \in \mathbb{F}_{\sigma_1 \sigma_2}} \sup_{x \in W} d_x(\sigma_1(x), \sigma_2(x)) \leq \inf_{W \in \mathbb{F}_{\sigma_1 \sigma_2}}\left( \sup_{x \in W} d_x(\sigma_1(x), \sigma_3(x))+\sup_{x \in W} d_x(\sigma_3(x), \sigma_2(x))\right).
\end{equation*}
Given $\varepsilon> 0$, there exist $W'$ and $W''$ such that
\begin{align*}
\sup_{x \in W'} d_x(\sigma_1(x), \sigma_3(x)) &<\inf_{W \in \mathbb{F}_{\sigma_1, \sigma_3}}\sup_{x \in W} d_x(\sigma_1(x), \sigma_3(x)) +\varepsilon/2 \\
 \sup_{x \in W''} d_x(\sigma_3(x), \sigma_2(x)) &<\inf_{W \in \mathbb{F}_{\sigma_2, \sigma_3}}\sup_{x \in W} d_x(\sigma_3(x), \sigma_2(x)) +\varepsilon/2 .
\end{align*}
Therefore,
\begin{multline*}
\sup_{x \in W'\cap W''} d_x(\sigma_1(x), \sigma_3(x))+\sup_{x \in W' \cap W''} d_x(\sigma_3(x), \sigma_2(x)) <\\ \inf_{W \in \mathbb{F}_{\sigma_1 \sigma_2}}\sup_{x \in W} d_x(\sigma_1(x), \sigma_3(x)) + \inf_{W \in \mathbb{F}_{\sigma_2 \sigma_3}}\sup_{x \in W} d_x(\sigma_3(x), \sigma_2(x))+\varepsilon.
\end{multline*}
Since $W' \cap W''$ is in $\mathbb{F}_{\sigma_1 \sigma_2}$ and $\varepsilon$ was chosen arbitrarily, the triangle inequality holds for $\rho_\F(\sigma, \mu)$.
\end{proof}

In the following, whenever we talk about a filter $\mathbb{F}$ in $X$ we will be considering a filter of open sets. For any pair of sections $\sigma$, $\mu$ with domains in a filter, we define $\sigma \sim_\mathbb{F} \mu$  if and only if $\rho_{\mathbb{F}}(\sigma, \mu)=0$. This is an equivalence relation, and the quotient space is therefore a metric space under $d_\mathbb{F}([\sigma], [\mu] )= \rho_\mathbb{F} (\sigma, \mu)$.  The quotient space provided with the metric $d_\F$ is the metric space associated with the filter $\mathbb{F}$. If $\F$ is principal and the topology of the base space X is given by a metric, then the associated metric space of that filter is complete. In fact completeness is a trivial consequence of the fact that sections are continuous and bounded in the case of a $\sigma$-complete filter (if $X$ is a metric space). However, principal filters are not interesting from the semantic point of view and $\sigma$-completeness might not hold for filters or even ultrafilters of open sets. The good news is that we can still guarantee completeness in certain kinds of ultrafilters.

\begin{theorem}\label{regspace}
  Let $\A$ be a sheaf of metric structures defined over a regular topological space $X$. Let $\F$ be an ultrafilter of regular open sets. Then, the induced metric structure in the quotient space $\A[\F]$ is complete under the induced metric.
\end{theorem}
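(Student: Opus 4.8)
### Proof plan

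The plan is to show that every Cauchy sequence $([\sigma_n])_{n\in\omega}$ in $\A[\F]$ converges. First I would pass to a subsequence so that $d_\F([\sigma_n],[\sigma_{n+1}]) < 2^{-n-2}$; it suffices to find a limit for this fast Cauchy subsequence. By the definition of $\rho_\F$ as an infimum over $\F_{\sigma_n\sigma_{n+1}}$, for each $n$ I can choose an open set $W_n \in \F$ with $W_n \subseteq \dom(\sigma_n)\cap\dom(\sigma_{n+1})$ and $\sup_{x\in W_n} d_x(\sigma_n(x),\sigma_{n+1}(x)) < 2^{-n-1}$. Here is where regularity of the open sets matters: I would replace each $W_n$ by its regularization $\interior(\overline{W_n})$, which is a regular open set containing $W_n$, still in $\F$ (since $\F$ is a filter containing $W_n$), and on which the supremum bound is still controlled — this is the step that needs care, since enlarging $W_n$ could spoil the estimate. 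The fix is to observe that $d_x(\sigma_n(x),\sigma_{n+1}(x))$ is a continuous function of $x$ on $\dom(\sigma_n)\cap\dom(\sigma_{n+1})$ (continuity of $d^\A$ with respect to the section topology), so it is bounded on $\overline{W_n}$ by its supremum on $W_n$ plus anything; more precisely, a continuous real function that is ${}<2^{-n-1}$ on a dense subset of an open set is ${}\le 2^{-n-1}$ on that open set, so on $\interior(\overline{W_n})$ we still get the bound with a harmless weakening to ${}<2^{-n}$.

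Next I would build the candidate limit section. Set $V = \bigcap_n W_n'$, where $W_n'$ is the regularized set; this need not be in $\F$, but I only need that on the (still open, possibly empty-looking but handled by the ultrafilter) pieces the sections converge. The key point is to define $\sigma(x) = \lim_n \sigma_n(x)$ pointwise wherever the pointwise sequence is Cauchy in $(E_x,d_x)$ — which, by the choice of the $W_n'$, includes every point of $V$ — and to check that $\sigma$ is continuous, hence a genuine section. Continuity of $\sigma$ follows because on any $W_m'$ the section $\sigma$ is the uniform limit (in the fiberwise metric, along the section topology) of the sections $\sigma_n$ for $n\ge m$, and uniform limits of sections are sections: the image of $\sigma$ over $W_m'$ is open because for any $x\in W_m'$ and small basic neighborhood, eventually $\sigma_n$'s image is inside it and $d^\A$-continuity transfers this to $\sigma$. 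This is the standard "completeness of the space of continuous sections under uniform convergence" argument, and $d^\A$ being continuous in the section topology (item (e) of Definition \ref{metricsheafdef}) is exactly what makes it go through.

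Then I would verify $\dom(\sigma)\in\F$. This is where the ultrafilter-of-regular-open-sets hypothesis does its real work. The natural domain of $\sigma$ as constructed is $\bigcup_m \interior(\bigcap_{n\ge m} W_n')$ or something of that flavor; one wants this to be in $\F$. Since $\F$ is an ultrafilter, for each $m$ either $W_m'\in\F$ (which it is) and one leverages that a countable-looking shrinking stays "$\F$-large" in the regularized sense. Concretely I expect to argue: let $W' = \interior(\overline{\bigcap \text{(suitable tails)}})$; using that each $W_n'$ is regular open and in the ultrafilter, and that the regular open sets of a regular space form a complete Boolean algebra, the relevant countable meet taken in that Boolean algebra is still a member of $\F$ — OR, more carefully and staying elementary, one shows directly that $\sigma$ is equivalent under $\sim_\F$ to a section whose domain is simply $W_1'\in\F$, by noting $\rho_\F(\sigma,\sigma_1)$ is computed as an infimum over $\F_{\sigma\sigma_1}$ and the tail estimates $\sum_{n\ge m}2^{-n} \to 0$ force $\rho_\F(\sigma,\sigma_1)$ to be finite and, after restricting to $W_m'$, arbitrarily small beyond $\sigma_m$. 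In fact the cleanest route is: show $d_\F([\sigma_m],[\sigma]) \le \sum_{n\ge m} 2^{-n} = 2^{-m+1}$ directly from the estimate $\sup_{x\in W_m'} d_x(\sigma_m(x),\sigma(x)) \le \sum_{n\ge m}2^{-n}$, which holds because on $W_m'$ we have the telescoping bound, using that $W_m' \subseteq \dom(\sigma)$ — and $W_m'\in\F$ so $W_m'\cap\dom(\sigma)\cap\dom(\sigma_m)=W_m' \in \F_{\sigma_m\sigma}$ witnesses the infimum. Hence $[\sigma_m]\to[\sigma]$.

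The main obstacle, and the place the hypotheses are genuinely used, is the interplay between "enlarging $W_n$ to a regular open set to stay inside the ultrafilter" and "keeping the metric estimate on the enlargement"; the resolution is the continuity of $d^\A$ in the section topology, which upgrades a strict bound on the dense set $W_n$ to a (barely weakened) bound on $\interior(\overline{W_n})$. Regularity of $X$ is what guarantees there are enough regular open sets (every open set contains a nonempty regular open subset, and regularizations behave well), and the ultrafilter condition is what lets the construction of $\dom(\sigma)$ land back in $\F$. I would present the argument by first isolating a short lemma — "if $U$ is open and $g:U\to[0,1]$ is continuous with $g<\varepsilon$ on a dense subset, then $g\le\varepsilon$ on $U$" combined with "$W\in\F \Rightarrow \interior(\overline W)\in\F$" — and then running the fast-Cauchy-subsequence argument above, which makes the completeness proof essentially a bookkeeping of the geometric series $\sum 2^{-n}$.
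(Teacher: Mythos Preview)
Your proposal has two genuine gaps, both at the heart of the argument.

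First, the claim that ``uniform limits of sections are sections'' is unjustified and in general false for metric sheaves. The topology on $E$ is the \'etale topology generated by images of sections, and it is not controlled by the fiberwise metrics; uniform convergence in the $d_x$ does not force the limit function to be continuous for the \'etale topology. The paper flags this explicitly in Lemma~\ref{cauchylimit}: the pointwise limit $\mu_\infty$ is ``not necessarily continuous'', and only under the extra hypothesis that the image of $\mu_\infty$ has nonempty interior can one salvage a continuous restriction. The case where that interior is empty is real and requires separate treatment.

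Second, your argument that $\dom(\sigma)\in\F$ collapses. You define $\sigma$ on $V=\bigcap_n W_n'$, but $\F$ is not countably complete, so $V$ need not lie in $\F$ and may even have empty interior. Your ``cleanest route'' then asserts $W_m'\subseteq\dom(\sigma)$ and invokes $W_m'\in\F_{\sigma_m\sigma}$; but the containment goes the wrong way ($\dom(\sigma)=V\subseteq W_m'$), and the telescoping bound $\sup_{x\in W_m'}d_x(\sigma_m(x),\sigma(x))\le\sum_{n\ge m}2^{-n}$ is meaningless on $W_m'\setminus V$, where $\sigma_n(x)$ is undefined for large $n$. The appeal to countable meets in the Boolean algebra of regular opens does not help either: that meet is $\interior\bigl(\,\overline{\bigcap_n W_n'}\,\bigr)$, and there is no reason for an ultrafilter of regular opens to be closed under such countable meets.

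The paper's proof avoids both obstacles by abandoning the pointwise limit. It builds the limit section as a patchwork: on pairwise separated regular open pieces $V_k\subset W_k\setminus W_{k+1}$ it sets $\sigma_\infty\upharpoonright V_k=\mu^k\upharpoonright V_k$, so continuity is automatic because each piece is already a section. Regularity of $X$ is used to choose the $V_k$ with disjoint closures, making $\Gamma=\bigcup_k V_{2k-1}$ regular open; a Zorn's-lemma maximization over all such $\Gamma$ then produces one that meets every element of $\F$, hence lies in $\F$ by the ultrafilter property. The estimate $d_x(\sigma_\infty(x),\mu^k(x))<2k^{-1}$ on the common domain finishes the job. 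This patchwork-plus-maximization step is the idea your outline is missing.
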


In order to prove this theorem we need to state a few useful lemmas.

\begin{lemma}\label{nonempty}
  Let $A$ and $B$ be two regular open sets. If $A \setminus B \neq \emptyset$ then $\interior (A \setminus B) \neq \emptyset$.
\end{lemma}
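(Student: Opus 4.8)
The plan is to reduce the statement to the single set identity $\interior(A\setminus B)=A\setminus\overline B$ and then invoke the regularity of $B$. First I would rewrite $A\setminus B=A\cap(X\setminus B)$ and use that $A$ is open, together with the elementary fact that $\interior(A\cap C)=A\cap\interior(C)$ whenever $A$ is open, to obtain
\[
\interior(A\setminus B)=A\cap\interior(X\setminus B)=A\cap(X\setminus\overline B)=A\setminus\overline B .
\]
Hence it suffices to show that $A\setminus\overline B\neq\emptyset$ whenever $A\setminus B\neq\emptyset$.

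For this I would argue by contraposition. Assume $A\setminus\overline B=\emptyset$, i.e.\ $A\subseteq\overline B$. Since $A$ is open and contained in $\overline B$, it is contained in the largest open subset of $\overline B$, which is $\interior(\overline B)$. This is the only point at which the hypothesis on $B$ is used: because $B$ is a regular open set, $B=\interior(\overline B)$, so $A\subseteq B$ and therefore $A\setminus B=\emptyset$. Taking the contrapositive yields the lemma.

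No real obstacle arises: the argument is a two-line computation once one observes that the interior of $A\setminus B$ is exactly $A\setminus\overline B$. It is worth noting that regularity of $A$ is never used — only that $A$ is open and $B$ is regular open — which is the form in which Lemma~\ref{nonempty} will be needed when analysing differences of basic regular open sets from the ultrafilter $\F$ in the proof of Theorem~\ref{regspace}.
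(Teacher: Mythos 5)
Your proof is correct and is essentially the paper's argument in contrapositive form: both hinge on the single observation that an open $A$ contained in $\overline B$ must lie in $\interior(\overline B)=B$ by regularity of $B$. Your explicit identity $\interior(A\setminus B)=A\setminus\overline B$ and the remark that only openness of $A$ is needed are accurate but do not change the substance.
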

\begin{proof} 
  If $x\in A \setminus B$ and  $int(A\setminus B)= \emptyset$, then $x \in \overline{B}$ and $A \subset \overline{B}$. Therefore $A \subset \interior(\overline{B})=B$ which is in contradiction to the initial hypothesis.
\end{proof}

\begin{lemma}\label{cauchylimit}
Let $\F$ be a filter and $\{ \sigma_n\}$ be a Cauchy sequence of sections according to the pseudometric $\rho_\F$ with all of them defined in an open set $U$ in $\F$. Then
\begin{enumerate}
\item There exists a limit function $\mu_\infty$ not necessarily continuous defined on $U$ such that $\lim_{n \to \infty} \rho_\F(\sigma_n, \mu_\infty)=0$.
\item If $X$ is a regular topological space and $\interior(\text{\rm im}(\mu_\infty))\neq \emptyset$, there exists an open set $V \subset U$, such that $\mu_\infty \upharpoonright V$ is continuous.
\end{enumerate}
\end{lemma}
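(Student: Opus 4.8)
The plan is to treat the two parts separately; part~(1) carries all the weight, since $\F$ need not be $\sigma$-complete and so one cannot hope to find a single open set of $\F$ on which the whole sequence converges uniformly.

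For part~(1) I would first pass to a rapidly Cauchy subsequence, relabelled $(\sigma_{N_k})_k$, with $\rho_\F(\sigma_{N_k},\sigma_{N_{k+1}})<2^{-k}$; by the definition of $\rho_\F$ this yields open sets $W_k\in\F$ with $W_k\subseteq U$ and $\sup_{x\in W_k}d_x(\sigma_{N_k}(x),\sigma_{N_{k+1}}(x))<2^{-k}$. Since $\F$ is a filter, the finite intersections $W_k':=W_1\cap\cdots\cap W_k$ again belong to $\F$ and form a decreasing chain; write $W_\infty=\bigcap_k W_k'$. Chaining the triangle inequality in the fiber gives, for $x\in W_k'$ and $i\le j\le k$, the bound $d_x(\sigma_{N_i}(x),\sigma_{N_j}(x))<\sum_{l=i}^{j-1}2^{-l}<2^{-i+1}$; in particular for $x\in W_\infty$ the sequence $(\sigma_{N_j}(x))_j$ is Cauchy in the complete space $(E_x,d_x)$ and converges. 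I would then define $\mu_\infty$ on $U$ along the partition $U=(U\setminus W_1')\sqcup\bigsqcup_{m\ge1}(W_m'\setminus W_{m+1}')\sqcup W_\infty$ by setting $\mu_\infty(x)=\sigma_{N_1}(x)$ on $U\setminus W_1'$, $\mu_\infty(x)=\sigma_{N_m}(x)$ on $W_m'\setminus W_{m+1}'$, and $\mu_\infty(x)=\lim_j\sigma_{N_j}(x)$ on $W_\infty$. A short case analysis (if $x\in W_k'$ then either $x\in W_\infty$, or $m:=\max\{j:x\in W_j'\}$ exists and satisfies $m\ge k$) shows $d_x(\sigma_{N_k}(x),\mu_\infty(x))\le2^{-k+1}$ for every $x\in W_k'$, whence $\rho_\F(\sigma_{N_k},\mu_\infty)\le2^{-k+1}\to0$, using that $W_k'\in\F_{\sigma_{N_k}\mu_\infty}$ because $\dom(\mu_\infty)=U\in\F$. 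Finally, the triangle inequality for $\rho_\F$ established above together with the Cauchy hypothesis gives $\rho_\F(\sigma_n,\mu_\infty)\le\rho_\F(\sigma_n,\sigma_{N_k})+\rho_\F(\sigma_{N_k},\mu_\infty)\to0$.

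For part~(2) I note first that the $\mu_\infty$ constructed in part~(1) satisfies $p\circ\mu_\infty=\mathrm{id}_U$, since $\mu_\infty(x)\in E_x$ for each $x$. Because the images of sections form a basis for the topology of $E$, the nonempty open set $\interior(\mathrm{im}(\mu_\infty))$ is a union of section-images and hence contains $\mathrm{im}(\nu)$ for some section $\nu$ with $\mathrm{im}(\nu)\neq\emptyset$, so that $\mathrm{im}(\nu)\subseteq\mathrm{im}(\mu_\infty)$. I would put $V=\dom(\nu)$, an open subset of $X$. For $x\in V$ we have $\nu(x)\in\mathrm{im}(\mu_\infty)$, say $\nu(x)=\mu_\infty(y)$ with $y\in U$; applying $p$ and using $p\circ\nu=\mathrm{id}$ and $p\circ\mu_\infty=\mathrm{id}_U$ forces $x=y$, hence $x\in U$ and $\mu_\infty(x)=\nu(x)$. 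Therefore $V\subseteq U$ and $\mu_\infty\upharpoonright V=\nu$ is continuous. (Incidentally, regularity of $X$ does not seem to be needed for part~(2); it is the standing hypothesis of Theorem~\ref{regspace}, for which this lemma is the key preliminary.)

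The hard part is the construction of $\mu_\infty$ in part~(1): the naive attempt to carry uniform convergence on a single set of $\F$ fails without $\sigma$-completeness, and the device of splitting $U$ along the nested finite intersections $W_1'\supseteq W_2'\supseteq\cdots$ — combined with fiberwise completeness on the residual set $W_\infty$ — is what repairs it. The remaining steps, namely the chained-triangle-inequality bound $d_x(\sigma_{N_k}(x),\mu_\infty(x))\le2^{-k+1}$ on $W_k'$ and the assembly of the full sequence from the subsequence via the triangle inequality for $\rho_\F$, are routine once this construction is in place; one only has to be careful with the bookkeeping of which $W_j'$ contains a given $x$.
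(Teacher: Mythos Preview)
Your argument is correct. For part~(2) it coincides with the paper's: both find a section whose image lies inside $\interior(\mathrm{im}(\mu_\infty))$ and use that $p\circ\mu_\infty=\mathrm{id}_U$ to identify it with a restriction of $\mu_\infty$; the paper takes the full $V=p(\interior(\mathrm{im}(\mu_\infty)))$ rather than a single section, but the mechanism is identical, and your remark that regularity of $X$ plays no role here is accurate.

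For part~(1) you take a genuinely different --- and more careful --- route. The paper's proof is one sentence: it asserts that $(\sigma_n(x))_n$ is Cauchy in the complete fiber $(E_x,d_x)$ and sets $\mu_\infty(x)=\lim_n\sigma_n(x)$ for every $x\in U$. As you correctly anticipate, $\rho_\F$-Cauchy does not obviously imply fibrewise Cauchy at each fixed $x$ without $\sigma$-completeness of $\F$, so the paper's sentence glosses over precisely the difficulty your layered construction resolves. In fact your device --- pass to a rapidly Cauchy subsequence, choose nested $W_k'\in\F$, define $\mu_\infty$ piecewise by $\sigma_{N_m}$ on $W_m'\setminus W_{m+1}'$ and by the fibrewise limit on $\bigcap_kW_k'$, and read off the uniform bound on $W_k'$ --- is exactly what the paper later deploys in the proof of Theorem~\ref{regspace}. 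You have essentially front-loaded that construction into the lemma, making the lemma self-contained at the cost of a longer proof; the paper instead keeps the lemma terse and repeats the real work inside the theorem.
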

\begin{proof}
  \begin{enumerate}
  \item This follows from the fact that $\{ \sigma_n(x)\}$ is a Cauchy sequence in the complete metric space $(E_x, d_x)$. Then let $\mu_\infty(x)= \lim_{n \to \infty} \sigma_n (x)$ for each $x \in U$.
\item Consider the set of points $e$ in $\mu_{\infty}$ such that there exists a section $\eta$ defined in some open neighborhood $U$ of $x$, with $\eta (x)=e$ and $\eta \subset \sigma_\infty$. Let $V$ be the projection set in $X$ of that set of points $e$. This is an open subset of $U$ and $\mu_\infty \upharpoonright V$ is a section. 
\end{enumerate}
\end{proof}

We can now prove Theorem \ref{regspace}. 

\begin{proof}
  Let $\{[\sigma^m] | m \in \omega\}$ be a Cauchy sequence in the associated metric space of an ultrafilter of regular open sets $\mathbb{F}$. If the limit exists, it is unique and the same for every subsequence. Thus, we define the subsequence $\{[\mu^k] | k \in \omega\}$ by making $[\mu^k]$ equal to $[\sigma^m]$ for the minimum $m$ such that for all $n\geq m$, $d_\mathbb{F}([\sigma^m],[\sigma^n]) < k^{-1}$. Since $d_\mathbb{F}([\mu^k],[\mu^{k+1}]) < k^{-1}$, for every pair $(k, k+1)$, there exists an open set $U_k \in \F$, such that 
\[
\sup _{x \in U_k}d_x(\mu^k(x), \mu^{k+1}(x)) < k^{-1}.
\]
Let $W_1= U_1$, $W_m= \cap_{i=1}^m U_k$ and define a function $\mu_\infty$ on $W_1$ as follows.
\begin{itemize}
\item If $x \in W_k \setminus W_{k+1}$ for some $k$, let $\mu_\infty(x)= \mu^k(x)$.
\item Otherwise, if $x \in W_k$ for all $k$, we can take $\mu_\infty(x)=\lim_{k \to \infty}\mu^k(x)$.
\end{itemize}

The function $\mu_{\infty}$ might not be a section but, based on the above construction, one can find  a suitable restriction $\sigma_\infty$ that is indeed a section but defined on a smaller domain. We show this by analyzing different cases.
\begin{enumerate}
\item \label{item1} If $W_1=W_k$ for all $k$, $\bigcap_k W_k =W_1$ then for all $x$ in $W_1$, $\sigma_\infty(x)=\lim_{k \to \infty}\mu^k(x)$. 
\begin{enumerate}
\item \label{case1} Suppose $\interior (\mu_\infty) =\emptyset$. Let $\tilde B_1 = W_1$. For every $x$ in $B_1$ choose a section $\eta_x$, such that $\eta_x(x)=\mu_\infty(x)$; by the continuity of $d^\A$ in the sheaf of structures, the set $\tilde B_k= p(\langle \eta_x, \mu^k \rangle \cap (d^\A)^{-1}[0, k^{-1}))$ for $k \geq 2$ is an open neighborhood of $x$. Consider $\bigcap_{k\in \omega} \tilde B_k$. It is clear that this set is not empty and that $\interior (\bigcap_{k\in \omega} \tilde B_k) = \emptyset$, as we assumed that $\interior (\mu_\infty)= \emptyset$. Since the base space is regular, there exists a local basis on $x$ consisting of regular open sets. We can define a family $\{B_k\}$ of open regular sets so that
  \begin{itemize}
  \item $B_1 := \tilde B_1$
  \item $B_k \subset \tilde B_k$
  \item $B_{k+1} \subset B_{k}$
  \item $x \in B_{k}$
  \end{itemize}
   Let $C_1:=B_1=W_1$. For all $k \geq 2$, define  $C_{k+1} \subset C_k \cap B_{k+1}$ with the condition that $C_{k+1}$ is a regular open set and let $V_k \subset C_{k}\setminus C_{k+1}$ be some regular open set such that 
\begin{equation*}
\overline{V}_k \;\cap\; \overline{C_k \setminus C_{k+1}}\setminus \interior (C_k \setminus C_{k+1}) = \emptyset ,
\end{equation*}
(if $C_{k+1} \subsetneq C_k$, this is possible by Lemma \ref{nonempty}; if $C_{k+1} =C_{k}$ let $V_k=\emptyset$) - i.e., the closure of $V_k$ does not contain any point in the boundary of $C_k \setminus C_{k+1}$ ( Use Lemma \ref{nonempty} and the fact that $X$ is regular). Then $\bigcap_{k\in \omega} C_k \supset \{x\}$.
 Now, if necessary, we renumber the family ${V_k}$ so that all the empty choices of $V_k$ are removed from this listing. Let $ \Gamma= \Gamma_{odd}:=\bigcup_{k=1}^{ \infty} V_{2k-1}$ and observe that this is an open regular set: 
\begin{align*}
 \overline \Gamma &= \overline{\bigcup_{k \in \omega} V_{2k-1}}=\bigcup_{k \in \omega} \overline{V_{2k-1}}\\
\interior \bigl( \overline{\Gamma}\bigr)&= \interior \bigl( \bigcup_{k \in \omega} \overline{V_{2k-1}}\bigr)= \bigcup_{k \in \omega}\interior( \overline{V_{2k-1}})=\Gamma .
\end{align*}
For the first equality observe that if $z \in \overline{\bigcup_{k \in \omega} V_{2k-1}}$ then $z \in \overline V_{2l-1}$ for only one $l$ since $\overline{V_n}\cap \overline{V_m}=\emptyset$ for $m\neq n$. In the second line, if $z \in  \interior \bigl( \bigcup_{k \in \omega} \overline{V_{2k-1}}\bigr)$, then every open set containing $z$ is a subset of $\bigcup_{k \in \omega} \overline{V_{2k-1}}$ and again since $\overline{V_n}\cap \overline{V_m}=\emptyset$ for $m\neq n$, there exists at least an open neighborhood that is a proper subset of a unique $V_{2l-1}$.

 If $\Gamma$ is an element of $\F$, then we can define the section $\sigma_{\infty}$ in the open regular set $\Gamma$ by $\sigma_{\infty}\upharpoonright V_{2k-1} := \mu^{2k-1}\upharpoonright V_{2k-1}$. This is a limit section of the original Cauchy sequence. 

  Now, consider the family $\mathbb{G}$ of all $\Gamma$s that can be defined as above for the same element $x$ in $W_1$ and for the same family $\{C_k\}$. $\mathbb{G}$ is partially ordered by inclusion. Consider a chain $\{\Gamma_i\}$ in $\mathbb{G}$. Observe that $\bigcup_i \Gamma_i$ is an upper bound for this and that $\bigcup_i\Gamma_i$ is regular, since
\begin{gather*}
\bigcup_i\Gamma_i \subset \bigcup_i \interior\Bigl(\overline{C_{2i-1}\setminus C_{2i}}\Bigr)\\
\overline{C_{2i-1}\setminus C_{2i}}\:\cap\: \overline{C_{2i+1}\setminus C_{2i+2}}=\emptyset
\end{gather*}
 Thus, by Zorn Lemma there is a maximal element $\Gamma_{max}$. This intersects every element in the ultrafilter and therefore is an element of the same. Otherwise  we would be able to construct $\tilde \Gamma_{max} \supsetneq \Gamma_{max}$. Suppose that there exists $A \in \F$ such that $\Gamma_{max}\cap A = \emptyset$, then we could repeat the above arguments taking an element $x'$ in $W_1':= A \cap W_1= A \cap C_1$, finding $\Gamma' \subset W_1'$ with $\Gamma' \cap \Gamma_{max}=\emptyset$. Take  $\tilde \Gamma_{max}=\Gamma' \cup \Gamma_{max}$.

\item \label{case2} If $\interior (\mu_\infty) \neq \emptyset$, then $U=p(\interior (\mu_{\infty}))$ is an open subset of $W_1$. Observe that $W_1\setminus U$ is an open set that contains all possible points of discontinuity of $\mu_{\infty}$. If some regular open set $V \subset U$ is an element of the ultrafilter, then $\mu_\infty \upharpoonright V$ is a limit section. If that is not the case, $V = \interior(X\setminus U)$ is in the ultrafilter and $V \cap W_1$ is an open regular set where $\mu_\infty$ is discontinuous at every point. Proceed as in case \ref{case1}.
\end{enumerate}
\item If there exists $N$ such that for all $m>N$ $W_N=W_m$, we rephrase the arguments used in \ref{case1}, this time defining $\sigma_\infty$ in a subset of $W_N$. Also, if $\bigcap_{k \in \omega} W_k$ is open and nonempty, we follow the same arguments as in \ref{case2}.
\item If for all $k$ $W_{k+1}\neq W_k$ and $\interior (\bigcap_{k \in \omega} W_k) \neq \emptyset$, let $W_1'= \interior (\bigcap_{k \in \omega} W_k)$ and use the same line of argument as in cases \ref{case1} and \ref{case2}.
\item If $\interior (\bigcap_{k \in \omega} W_k) = \emptyset$, for all $k$ such that  $W_k \setminus W_{k+1} \neq \emptyset$ define $\sigma_\infty$ on some open regular set $\overline{V}_k \subset \interior (W_k \setminus W_{k+1})$ so that $\sigma_{\infty}\upharpoonright V_k=\mu^k \upharpoonright V_k$. Then, $\sigma_\infty$ is defined in $\bigcup_{k \in \omega} V_k$, and repeat the line of argument used in case \ref{case1}.
\end{enumerate}
 Note that for all $\mu^k$
\begin{itemize}
\item If $\sigma_\infty(x)= \mu^{k+n}(x)$, then $d_x(\sigma_\infty(x), \mu^k(x)) < k^{-1}$ for $x$ in the common domain.
\item If $\sigma_\infty(x)= \lim_{n \in \omega}\mu^{n}(x)$, then there exists $N$ such that for $m>N$ $d_x(\sigma_\infty(x), \mu^m(x))< k^{-1}$ and taking $m > k$, by the triangle inequality $d_x(\sigma_\infty(x), \mu^k(x))< 2 k^{-1}$.
\end{itemize}
This shows that 
\begin{equation*}
   \sup_{x \in W_k \cap \bigcup V_n} d_x(\sigma_\infty (x),\mu^k(x)) < 2(k-1)^{-1} 
 \end{equation*}
and then $\sigma_\infty$ is a limit section. Finally, check that $p \circ \sigma^\infty = Id_{\dom(\sigma_\infty)}$.
\end{proof}

Before studying the semantics of the quotient space of a generic filter, we define the relation $\Vdash_U$ of local forcing in an open set $U$ for a sheaf of metric structures. The definition is intended to make the following statements about local and point forcing valid
\begin{align*}
  \A \Vdash_U \phi(\sigma) < \varepsilon &\iff \forall x \in U \; \A \Vdash_x \phi(\sigma)< \delta \text{ and } \\
  \A \Vdash_U \phi(\sigma) > \delta &\iff \forall x \in U \; \A \Vdash_x \phi(\sigma)> \varepsilon,
\end{align*}
for some $\delta < \varepsilon$. This is possible as a consequence of the truth continuity lemma.

\begin{definition}[Local forcing for Metric Structures]
  Let $\A$ be a Sheaf of metric structures defined in $X$, $\varepsilon$ a positive real number, $U$ an open set in $X$, and $\sigma_1,\dots, \sigma_n$ sections defined in $U$. If $\phi$ is an $\mathcal{F}$- restricted formula the relations $\A \Vdash_U \phi(\sigma) < \epsilon$ and $\A \Vdash_U \phi(\sigma) > \epsilon$ are defined by the following statements

Atomic formulas
\begin{itemize}
\item $\A \Vdash_U d(\sigma_1, \sigma_2) < \varepsilon \iff \sup_{x \in U} d_x(\sigma_1(x), \sigma_2(x)) < \varepsilon$
\item $\A \Vdash_U d(\sigma_1, \sigma_2) > \varepsilon \iff \inf_{x \in U} d_x(\sigma_1(x), \sigma_2(x))> \varepsilon$.
\item $\A \Vdash_U R(\sigma_1,\dots, \sigma_n) < \varepsilon \iff \sup_{x \in U} R^{\A_x}(\sigma_1(x),\dots, \sigma_n(x)) < \varepsilon$
\item $\A \Vdash_U R(\sigma_1,\dots, \sigma_n) > \varepsilon \iff \inf_{x \in U} R^{\A_x}(\sigma_1(x),\dots, \sigma_n(x))> \varepsilon$
\end{itemize}

Logical connectives
\begin{itemize}
\item $\A \Vdash_U \max(\phi, \psi) < \varepsilon \iff$ $\A \Vdash_V \phi < \varepsilon$ and $\A \Vdash_W \psi < \varepsilon$
\item $\A \Vdash_U \max(\phi, \psi) > \varepsilon \iff$ There exist open sets $V$ and $W$ such that $V \cup W = U$ and $ \A \Vdash_V \phi > \varepsilon$ and $\A \Vdash_W \psi > \varepsilon$
\item $\A \Vdash_U \min(\phi, \psi) < \varepsilon \iff $  There exist open sets $V$ and $W$ such that $V \cup W = U$ and  $\A \Vdash_V \phi < \varepsilon$ and $\A \Vdash_W \psi < \varepsilon$ 
\item $\A \Vdash_U \min(\phi, \psi) < \varepsilon \iff \A \Vdash_U \phi < \varepsilon$ and $\A \Vdash_U \psi < \varepsilon$
\item $\A \Vdash_U 1 \ts \psi < \varepsilon \iff \A \Vdash_U \psi> 1 \ts \varepsilon$
\item $\A \Vdash_U 1 \ts \psi > \varepsilon \iff \A \Vdash_U \psi< 1 \ts \varepsilon$
\item $\A \Vdash_U \phi \ts \psi < \varepsilon \iff$ One of the following holds\\
i) $\A \Vdash_U \phi < \psi $\\
\hspace{2.5cm} ii) $\A \nVdash_U \phi < \psi$ and $\A \nVdash_U \phi > \psi$\\
\hspace{2.5cm} iii) $\A \Vdash_U \phi > \psi $ and $ \A \Vdash_U \phi < \psi + \varepsilon$
\item $\A \Vdash_U \phi \ts \psi > \varepsilon \iff$ $ \A \Vdash_U \phi > \psi + \varepsilon$
\end{itemize}

Quantifiers
\begin{itemize}
\item $\A \Vdash_U \inf_\sigma \phi(\sigma) < \varepsilon \iff $ there exist an open covering $\{U_i\}$ of $U$ and a family of section $\mu_i$ each one defined in $U_i$ such that $\A \Vdash_{U_i} \phi(\mu_i) < \varepsilon$  for all $i$
\item $\A \Vdash_U \inf_\sigma \phi(\sigma) > \epsilon \iff $   there exist $\varepsilon'$ such that $0< \varepsilon< \varepsilon'$  and an open covering $\{U_i\}$ of $U$ such that for every section $\mu_i$ defined in $U_i$ $\A \Vdash_{U_i} \phi(\mu_i) > \varepsilon'$
\item $\A \Vdash_U \sup_\sigma \phi(\sigma) < \varepsilon \iff $ there exist $\varepsilon'$  such that $0< \varepsilon'< \varepsilon$  and an open covering $\{U_i\}$ of $U$ such that for every section $\mu_i$ defined in $U_i$ $\A \Vdash_{U_i} \phi(\mu_i) < \varepsilon'$
\item $\A \Vdash_U \sup_\sigma \phi(\sigma) > \varepsilon \iff $ there exist an open covering $\{U_i\}$ of $U$ and a family of section $\mu_i$ each one defined in $U_i$ such that $\A \Vdash_{U_i} \phi(\mu_i) > \varepsilon$  for all $i$
\end{itemize}
\end{definition}

Observe that the definition of local forcing leads to the equivalences
\begin{align*}
 \A \Vdash _U \inf _\sigma (1 \ts \phi(\sigma)) > 1 \ts \varepsilon  &\iff \A \Vdash _U \sup _\sigma \phi(\sigma) < \varepsilon,\\
\A \Vdash _U \inf _\sigma (\phi(\sigma)) < \varepsilon  &\iff \A \Vdash _U \sup _\sigma (1 \ts \phi(\sigma)) > 1 \ts \varepsilon.
\end{align*}

 The fact that we can obtain a similar statement to the
 Maximum Principle of~\cite{CAIC} is even more important.

\begin{theorem}[Maximum Principle for Metric structures]
If $\A \Vdash_U \inf_{\sigma} \phi(\sigma) < \varepsilon$ then there exists a section $\mu$ defined in an open set $W$ dense in $U$ such that $\A \Vdash_U \phi(\mu) < \varepsilon'$, for some $\varepsilon' < \varepsilon$.
\end{theorem}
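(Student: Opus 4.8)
The plan is to unwind the definition of $\A \Vdash_U \inf_\sigma \phi(\sigma) < \varepsilon$, pass immediately to point forcing in order to extract a \emph{single} real number controlling every piece, and then glue a section on a dense open subset of $U$ by a Zorn's Lemma argument. (The conclusion is of course to be read with $\A \Vdash_W \phi(\mu) < \varepsilon'$, since the section $\mu$ produced is defined only on $W$.)

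First I would use the equivalence between local and point forcing recorded just before the definition of $\Vdash_U$: from $\A \Vdash_U \inf_\sigma \phi(\sigma) < \varepsilon$ we obtain a real $\delta$ with $0 < \delta < \varepsilon$ such that $\A \Vdash_x \inf_\sigma \phi(\sigma) < \delta$ for every $x \in U$. By the point-forcing clause for $\inf$, each $x \in U$ carries a section $\mu_x$ with $\A \Vdash_x \phi(\mu_x) < \delta$, and by the truth continuity lemma for the forcing relation on sections there is an open neighbourhood $W_x \ni x$, which we may shrink to lie inside $\dom(\mu_x) \cap U$, with $\A \Vdash_y \phi(\mu_x) < \delta$ for all $y \in W_x$. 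Fixing once and for all a real $\varepsilon'$ with $\delta < \varepsilon' < \varepsilon$ and applying the local/point equivalence in the reverse direction, we get $\A \Vdash_{W_x} \phi(\mu_x) < \varepsilon'$ for every $x \in U$, with $\varepsilon'$ independent of $x$; and $\{W_x : x \in U\}$ is an open cover of $U$.

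Next comes the maximality step. I would consider the collection of all families of pairwise disjoint nonempty open sets each of whose members is contained in some $W_x$, ordered by inclusion; chains have their unions as upper bounds, so Zorn's Lemma yields a maximal such family $\{V_j : j \in J\}$. Choose $x(j)$ with $V_j \subseteq W_{x(j)}$ and set $W = \bigcup_{j} V_j$. Maximality forces $W$ to be dense in $U$: if a nonempty open $O \subseteq U$ were disjoint from $W$, then for any $x \in O$ the set $O \cap W_x$ would be a nonempty open set disjoint from every $V_j$, contradicting maximality. Define $\mu$ on $W$ by $\mu \upharpoonright V_j = \mu_{x(j)} \upharpoonright V_j$; as the $V_j$ are disjoint open sets and each $\mu_{x(j)} \upharpoonright V_j$ is a section, $\mu$ is a well-defined section on $W$, continuity being a local matter on each $V_j$ and $p \circ \mu = Id_W$.

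Finally I would verify $\A \Vdash_W \phi(\mu) < \varepsilon'$. This rests on two facts about $\mathcal{F}$-restricted formulas, each proved by a routine induction on complexity: point forcing $\A \Vdash_y \phi(\sigma)$ depends only on the germ of $\sigma$ at $y$, and it is preserved under passing to a smaller base open set. Given $y \in W$, say $y \in V_j$, we have $\mu(y) = \mu_{x(j)}(y)$ and $V_j \subseteq W_{x(j)}$, so from $\A \Vdash_z \phi(\mu_{x(j)}) < \delta$ for all $z \in W_{x(j)}$ we obtain $\A \Vdash_y \phi(\mu) < \delta$. Hence $\A \Vdash_y \phi(\mu) < \delta$ for every $y \in W$, and since $\delta < \varepsilon'$ the local/point equivalence gives $\A \Vdash_W \phi(\mu) < \varepsilon'$. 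The crux of the whole argument — and the reason for descending to point forcing at the very start — is obtaining a \emph{uniform} $\varepsilon' < \varepsilon$: the bare definition of $\inf$-forcing supplies, patch by patch, bounds that may accumulate at $\varepsilon$, and gluing local-forcing statements directly would not by itself produce a single $\varepsilon'$; the one value $\delta$ extracted from point forcing is what makes the gluing safe. The remaining work is bookkeeping: that the glued $\mu$ is genuinely a section and that forcing is insensitive to the representative chosen on each patch.
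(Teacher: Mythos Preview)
Your argument is correct and follows the same skeleton as the paper's proof: extract a uniform $\varepsilon' < \varepsilon$ (the paper simply asserts the existence of such an $\varepsilon'$ in its first sentence, while you derive it carefully via the local/point equivalence), then apply Zorn's Lemma to glue a section over a dense open subset of $U$. The only structural difference is the object to which Zorn's Lemma is applied: the paper orders the set $\mathcal{S}=\{\mu \mid \dom(\mu)\subset U,\ \A\Vdash_{\dom(\mu)}\phi(\mu)<\varepsilon'\}$ by extension of sections and takes a maximal element, whereas you order families of pairwise disjoint open patches (each contained in some $W_x$) and glue along a maximal family. Your formulation has the advantage that well-definedness of the glued $\mu$ is automatic, since the patches are disjoint; in the paper's version one must check that the union of a chain in $\mathcal{S}$ remains a section satisfying the forcing condition, which is routine but left implicit. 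Either way the density of $W=\dom(\mu)$ follows from maximality by the same contradiction you give.
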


\begin{proof}
  That $\A \Vdash_U \inf_{\sigma} \phi(\sigma) < \varepsilon$ is equivalent to the existence of an open covering $\{U_i\}$ and a family of sections $\{ \mu_i \}$ such that $\A \Vdash_{U_i} \phi(\mu_i) < \varepsilon'$ for some $\varepsilon' < \varepsilon$. The family of sections $\mathcal{S}=\{\mu | \;\dom(\mu) \subset U \text{ and } \A \Vdash_{\dom(mu)} \phi(\mu) < \varepsilon' \}$ is nonempty and is partially ordered by inclusion. Consider the maximal element $\mu*$ of a chain of sections in $\mathcal{S}$. Then $\dom(\mu*)$ is dense in $U$ and $\A \Vdash_{\dom(\mu *)} \phi(\mu *) < \varepsilon'$.
\end{proof}

\section[Metric Generic Model Theorem]{The Metric Generic Model and its theory}\label{secgenmodel}

 In certain cases, the quotient space of the metric sheaf can be the
 universe of a metric structure in the same language as each of the
 fibers. We examine in this section one such case (sheaves of metric
 structures over \emph{regular} topological spaces, and generic for
 filter of \emph{regular} open sets. We do not claim that this is the
 optimal situation - however, we provide a proof of a version of a
 Generic Model Theorem for these Metric Generic models.

\begin{definition}[Metric Generic Model]
Let $\A=(X,p,E)$ be a sheaf of metric structures defined on a regular
topological space $X$ and $\F$ an ultrafilter of regular open sets in
the topology of $X$. We define the Metric Generic Model $\A[\F]$ by
\begin{equation*}
\A [\F]=\{ [\sigma]/ _{\sim_\F} | \dom(\sigma) \in \F\},
\end{equation*}
provided with the metric $ d_\F $ defined above, and
with
\begin{itemize}
\item 
\begin{equation*}
f^{\A [\F]}([\sigma_1]/ _{\sim_\F}, \dots, [\sigma_n]/ _{\sim_\F})=[f^{\A}(\sigma_1, \dots, \sigma_n)]/_{\sim_\F}
\end{equation*}
with modulus of uniform continuity $\Delta_{f}^{\A[\F]}=\inf_{x \in X} \Delta_{f}^{\A_x}$.
\item
\begin{equation*}
 R^{\A [\F]}([\sigma_1]/ _{\sim_\F}, \dots, [\sigma_n]/_{\sim_\F})=\inf_{U \in \F_{\sigma_1 \dots \sigma_n}}\: \sup_{x \in U} R_x(\sigma_1(x), \dots, \sigma_n(x))
\end{equation*}
with modulus of uniform continuity $\Delta_{R}^{\A[\F]}=\inf_{x \in X} \Delta_{R}^{\A_x}$.
\item 
\begin{equation*}
c^{\A [\F]}=[c]/_{\sim_\F}
\end{equation*}
\end{itemize}
\end{definition}

 Observe that the properties of $d_\F$ and the fact that $R^{\A}$ is continuous ensure that the Metric Generic Model is well defined as a metric structure. Special attention should be paid to the uniform continuity of $R^{\A[\F]}$. We prove this next:
 \begin{proof}
   It is enough to show this for an unary relation. First, suppose $d_\F([\sigma], [\mu])< \inf_{x \in X}\Delta_R^{\A_x}(\varepsilon)$, then
\begin{align*}
     \inf_{U \in \F_{\sigma \mu}}\sup_{x \in U}d_x(\sigma(x), \mu(x))&< \inf_{x \in X}\Delta_R^{\A_x}(\varepsilon)\\
\intertext{which implies that there exists $V \in \F_{\sigma \mu}$ such that }
\sup_{x \in V}d_x(\sigma(x), \mu(x))&< \inf_{x \in X}\Delta_R^{\A_x}(\varepsilon),
\intertext{and by the uniform continuity of each $R^{\A_x}$}
\sup_{x\in V}|R(\sigma(x))-R(\mu(x))|&\leq \varepsilon.
\intertext{We now state that}
\left|\inf_{U \in \F_\sigma}\:\sup_{x \in U}R(\sigma(x))-\inf_{U \in \F_\mu}\:\sup_{x \in U}R(\sigma(x))\right|&\leq \sup_{x\in V}\left|R(\sigma(x))-R(\mu(x))\right|.\\
\intertext{First consider $R^{\A[\F]}([\sigma]/_{\sim \F})\geq R^{\A[\F]}([\mu]/_{\sim \F})$} 
|R^{\A[\F]}([\sigma]/_{\sim \F})-R^{\A[\F]}([\mu]/_{\sim \F})|&=R^{\A[\F]}([\sigma]/_{\sim \F})- R^{\A[\F]}([\mu]/_{\sim \F})\\
&\leq \sup_{x \in V}R(\sigma(x))-R^{\A[\F]}([\mu]/_{\sim \F})
\intertext{Now, for all $\delta>0$ there exists $W \in \F_\mu$ such that }
\sup_{x\in W}R(\mu(x))&<\inf_{U \in \F_\mu}\:\sup_{x \in U}R(\mu(x))+\delta
\intertext{and indeed the same is true for $V'=V\cap W \in \F_{\sigma \mu}$. Therefore}
|R^{\A[\F]}([\sigma]/_{\sim \F})-R^{\A[\F]}([\mu]/_{\sim \F})|&\leq \sup_{x \in V'}R(\sigma(x))-\sup_{x\in V'}R(\mu(x))+\delta,
\intertext{where we have substituted $V$ by $V'$ in the first term since $V' \subset V$, and we can apply the same arguments to
 it. Also, since $\delta$ is arbitrary}
|R^{\A[\F]}([\sigma]/_{\sim \F})-R^{\A[\F]}([\mu]/_{\sim \F})|&\leq \sup_{x \in V'}R(\sigma(x))-\sup_{x\in V'}R(\mu(x))\\
&\leq \sup_{x \in V'}\left(R(\sigma(x))-R(\mu(x))\right)\\
&\leq \left|\sup_{x \in V'}\left(R(\sigma(x))-R(\mu(x))\right)\right|\\
&\leq \sup_{x \in V'}\left| R(\sigma(x))-R(\mu(x))\right|\leq \varepsilon
\end{align*}
In the case of $R^{\A[\F]}([\sigma]/_{\sim \F})\leq R^{\A[\F]}([\mu]/_{\sim \F})$ similar arguments hold.
 \end{proof}
 
 At this point the reader may find that part of the ``generality'' of
 the so called generic model is lost. This is indeed true and it is a
 consequence of the additional conditions that we have imposed on the
 topology of the base space (regularity) and on the ultrafilter to
 obtain a Cauchy complete metric space. However, we still refer to
 this model as generic to stress its resemblance to Caicedo's notion of
 generic model\cite{CAIC}. 

 We can now present the Generic Model Theorem (GMT) for metric structures. This provides a nice way to describe the theory of the metric generic model by means of the forcing relation and topological properties of the sheaf of metric structures. 

\begin{theorem}[Metric Generic Model Theorem]\label{genmodtheometric}
Let $\F$ be an ultrafilter of regular open sets on a regular topological space $X$ and $\A$ a sheaf of metric structures on $X$. Then
\begin{enumerate}
\item
\begin{equation*}
  \A[\F] \models \phi([\sigma]/_{\sim \F})< \varepsilon \iff \exists U \in \F \text{ such that } \A \Vdash_U \phi(\sigma)< \varepsilon
\end{equation*}
\item
  \begin{equation*}
     \A[\F] \models \phi([\sigma]/_{\sim \F})> \varepsilon \iff \exists U \in \F\text{ such that }\A \Vdash_U \phi(\sigma)> \varepsilon
  \end{equation*}
\end{enumerate}
\end{theorem}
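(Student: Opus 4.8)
The plan is to prove the Generic Model Theorem by induction on the complexity of the $\mathcal{F}$-restricted formula $\phi$, establishing both statements (1) and (2) simultaneously, since the truncated subtraction connective $1\ts\phi$ swaps $<$ and $>$ conditions. Throughout, the two main engines will be the Maximum Principle for Metric structures (to handle the $\inf$ quantifier) and the local forcing / truth continuity lemmas proved earlier in the section, together with the definition of the generic model's interpretation of $R$, $f$, and the metric $d_\F$ as $\inf_{U\in\F}\sup_{x\in U}$. I would also freely use the filter property of $\F$: a finite intersection of sets in $\F$ is in $\F$, and — crucially, since $\F$ is an \emph{ultra}filter — for any open $V$ either $V\in\F$ or $X\setminus\overline{V}$ (its regular-open complement) meets every element of $\F$, which is what lets the $\max$/$\min$ clauses go through.

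First I would do the base case, atomic formulas $d(\sigma_1,\sigma_2)$ and $R(\sigma_1,\dots,\sigma_n)$. Here $\A[\F]\models R([\sigma])<\varepsilon$ means by definition $\inf_{U\in\F_{\sigma}}\sup_{x\in U}R_x(\sigma(x))<\varepsilon$, which is literally the assertion that some $U\in\F$ has $\sup_{x\in U}R_x(\sigma(x))<\varepsilon$, i.e. $\A\Vdash_U R(\sigma)<\varepsilon$; the $>$ direction and the $d$-case are identical. For the connectives: $\max$ and $\min$ reduce to the boolean structure of satisfaction in a metric structure ($\max=\vee$ at the level of $<\varepsilon$ conditions becomes a conjunction, etc.), and one transfers this across the $\iff$ using that $\F$ is closed under finite intersection in one direction and is an ultrafilter of regular open sets in the other (to cover $U$ by two opens $V\cup W=U$ with one of them, or a regular-open piece of it, landing in $\F$). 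The $1\ts\phi$ clause is a formal rewriting using $1\ts(1\ts\varepsilon)=\varepsilon$ and the inductive hypothesis applied to $\phi$ with the complementary condition. The $\phi\ts\psi$ clauses are the fussiest of the connective cases: one must chase through the three sub-cases (i), (ii), (iii) in the local-forcing definition and match them against the pointwise behaviour of truncated subtraction in each fiber, using the inductive hypothesis for the strictly simpler conditions $\phi<\psi$, $\phi>\psi$, $\phi<\psi+\varepsilon$ (reading $\psi+\varepsilon$ as an abbreviation handled by the metric-structure semantics).

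The heart of the proof is the $\inf_\sigma\phi(\sigma)$ case. For statement (1), if $\A[\F]\models\inf_\sigma\phi([\sigma])<\varepsilon$, then $\inf$ over all $[\mu]$ of $\phi([\mu])$ is $<\varepsilon$, so there is a single section $\mu$ with $\dom(\mu)\in\F$ and $\A[\F]\models\phi([\mu])<\varepsilon$; by induction there is $U\in\F$ with $\A\Vdash_U\phi(\mu)<\varepsilon$, and then the (single-piece) open cover $\{U\}$ of $U$ witnesses $\A\Vdash_U\inf_\sigma\phi(\sigma)<\varepsilon$. Conversely, if $\A\Vdash_U\inf_\sigma\phi(\sigma)<\varepsilon$ for some $U\in\F$, the Maximum Principle hands us a section $\mu$ defined on an open set $W$ dense in $U$ with $\A\Vdash_U\phi(\mu)<\varepsilon'$ for some $\varepsilon'<\varepsilon$ — here I must argue that $W\in\F$ (or can be refined to an element of $\F$): since $W$ is dense open in $U\in\F$ and $\F$ is an ultrafilter of \emph{regular} open sets, the regularization $\interior(\overline{W})\supseteq U\cap(\text{something in }\F)$ forces $\interior(\overline{W})\in\F$, and one then needs $\phi(\mu)$ forced on a regular-open subset sitting inside $W$ itself; this density-to-filter-membership bridge is exactly where regularity of $X$ and regular-openness of $\F$ are indispensable, and I expect it to be the main obstacle. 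Once $W$ (or a suitable regular-open refinement $W'\subseteq W$, $W'\in\F$) is in hand, apply the inductive hypothesis backwards: $\A\Vdash_{W'}\phi(\mu)<\varepsilon'$ gives $\A[\F]\models\phi([\mu])<\varepsilon'<\varepsilon$, whence $\A[\F]\models\inf_\sigma\phi([\sigma])<\varepsilon$. The $\sup_\sigma$ case is obtained from the $\inf$ case via the already-noted equivalence $\A\Vdash_U\inf_\sigma(1\ts\phi)>1\ts\varepsilon\iff\A\Vdash_U\sup_\sigma\phi<\varepsilon$ (and its $\A[\F]\models$ counterpart), so no new work is needed there. Finally, I would remark that statement (2) for $\inf$ — the $>\varepsilon$ direction — uses the second $\inf$ clause of local forcing (with the parameter $\varepsilon'$ and an open cover) and the truth continuity lemma to pass between the uniform-on-$U$ bound and membership in $\F$, which is comparatively routine once the $<$ case is done.
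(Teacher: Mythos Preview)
Your outline has the right inductive shape and handles the connective cases essentially as the paper does, but there is a genuine gap in the atomic base case that you dismiss too quickly. You write that ``the $>$ direction and the $d$-case are identical'' to the $<$ direction, but they are not: by definition $R^{\A[\F]}([\sigma])$ is $\inf_{U\in\F_\sigma}\sup_{x\in U}R_x(\sigma(x))$, so $\A[\F]\models R([\sigma])>\varepsilon$ says that this $\inf\sup$ exceeds $\varepsilon$, whereas $\A\Vdash_U R(\sigma)>\varepsilon$ says $\inf_{x\in U}R_x(\sigma(x))>\varepsilon$. Knowing that $\sup_{x\in U}R_x>\varepsilon$ for \emph{every} $U\in\F$ does not hand you a single $U\in\F$ on which the \emph{infimum} over $x$ exceeds $\varepsilon$. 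The paper fills this gap by a genuine ultrafilter-of-regular-opens argument: pick a midpoint $\varepsilon'$ between $\varepsilon$ and the value $r$ of the $\inf\sup$, let $V$ be the (open) preimage under $d^\A$ (or $R^\A$) of $(\varepsilon',1]$, observe that $V$ meets every element of $\F$, and then use that $\F$ is an ultrafilter of regular opens to conclude $\interior(\overline{V})\in\F$, on which the desired lower bound holds. This is the step where the asymmetry between $<$ and $>$ in the local-forcing definition first bites, and it cannot be skipped.

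For the $\inf_\sigma$ quantifier you take a different route from the paper --- you invoke the Maximum Principle to produce a single witness $\mu$ on a dense open $W\subset U$, and then try to argue $W$ (or a refinement) lies in $\F$. You correctly flag this ``density-to-filter-membership bridge'' as the obstacle, and it is a real one: a dense open subset of $U\in\F$ need not contain any element of $\F$ (regularization gives $\interior(\overline W)\supseteq U$, which is on the wrong side). The paper avoids this entirely: it does \emph{not} use the Maximum Principle here, but instead orders the family $\mathcal{I}_\varepsilon=\{U\in\F:\A\Vdash_U\inf_\sigma\phi(\sigma)<\varepsilon\}$ by reverse inclusion, takes a maximal element $U'$ of a chain, and then argues from maximality and the ultrafilter dichotomy (either $V_i\in\F$ or $\interior(X\setminus V_i)\in\F$) that some single cover set $V_i$ already equals $U'$, yielding the witness $\mu_i$ directly. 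Your Maximum-Principle route may be salvageable, but as written the bridge you identify is missing and the paper's argument shows a cleaner way around it.
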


\begin{proof}
 Atomic formulas
\begin{itemize}
\item  $\A[\F] \models d_\F(\eqclass{1}, \eqclass{2})< \varepsilon$
  iff $\inf_{U \in \F_{\sigma_1 \sigma_2}}\sup_{x \in U}
  d_x(\sigma_1(x), \sigma_2(x))< \varepsilon$. This is equivalent to
  saying that there exists $U \in \F_{\sigma_1 \sigma_2}$ such that
  \[\sup_{x \in U}  d_x(\sigma_1(x), \sigma_2(x))<
  \varepsilon.\]
 Further, by definition, that it is equivalent to $\A
  \Vdash_{U} d(\sigma_1, \sigma_2)< \varepsilon$.
\item For $\A[\F] \models R(\eqclass{1}, \dots,\eqclass{n})$ the same arguments as before apply.
\item  $\A[\F] \models d_\F(\eqclass{1}, \eqclass{2})> \varepsilon$ iff $\inf_{U \in \F_{\sigma_1 \sigma_2}}\sup_{x \in U} d_x(\sigma_1(x), \sigma_2(x))> \varepsilon$.
\begin{itemize}
\item ($\Rightarrow$) Let $\inf_{U \in \F_{\sigma_1 \sigma_2}}\sup_{x \in U} d_x(\sigma_1(x), \sigma_2(x))=r$ and $\varepsilon'=(r+\varepsilon)/2$. Then, the set $V=p(\bra\sigma_1,\sigma_2 \ket \cap d^{\A \; -1}(\varepsilon', 0])$ is nonempty and intersects every open set in $\F$. If $V \notin \F$, consider $X \setminus \overline{V}$. That set is not an element of $\F$ since $\dom(\sigma_1)\cap \dom(\sigma_2)\cap X \setminus \overline{V}$ would also be in $\F$  with $d_x(\sigma_1(x), \sigma_2(x)) \leq \varepsilon'$ for all $x$ in it. Therefore $\interior (\overline{V}) \cap \dom(\sigma_1)\cap \dom(\sigma_2)\in \F$ and for every element in this set $d_x(\sigma_1(x), \sigma_2(x)) \geq \varepsilon'$, which implies that there exists $U' \in \F$ such that $\inf_{x \in U'} d_x(\sigma_1(x), \sigma_2(x))\geq \varepsilon' > \varepsilon$.
\item($\Leftarrow$)  If $\A \Vdash_V d(\sigma_1, \sigma_2)>
  \varepsilon$ for some $V \in \F_{\sigma_1 \sigma_2}$,  then $V$
  intersects any open set in the generic filter and for any element in
  $V$, $d_x(\sigma_1(x), \sigma_2(x))\geq r$  where  $r=\inf_{x \in
    V}d_x (\sigma_1(x), \sigma_2(x))$. Thus, for all $U \in \F$,
 \[\sup_{x \in U}d_x(\sigma_1(x), \sigma_2(x))\geq r\] and therefore 
\[
\inf_{U\in \F_{\sigma_1 \sigma_2}}\sup_{x \in U}d_x(\sigma_1(x), \sigma_2(x))\geq r > \varepsilon.
\]
\end{itemize}
\item  Similar statements to those claimed above show the case of $\A[\F] \models R(\eqclass{1},\dots, \eqclass{n})> \varepsilon$.
\end{itemize}
Logical connectives
\begin{itemize}
\item For the connectives $1 \ts$, $\min$ and $\max$, it follows by simple induction in each case. We only show the proof for one of these connectives.
\begin{align*}
\A[\F]\models& \min \left(\phi(\eqclass{1}), \psi(\eqclass{2})\right)<\varepsilon \\
\iff &\A[\F]\models \phi(\eqclass{1})<\varepsilon \text{ or } \A[\F]\models \psi(\eqclass{2})\\
\substack{\text{ind} \\ \iff} & \exists U_1 \in \F \; \A \Vdash_{U_1} \phi(\sigma_1) < \varepsilon \text { or } \exists U_2 \in \F \; \A \Vdash_{U_2} \psi(\sigma_2)< \varepsilon\\
\iff & \exists U \in \F \text{ such that }\A \Vdash_{U} \min(\phi(\sigma_1),\psi(\sigma_2))< \varepsilon.
\end{align*}
\item  If $ \A[\F] \models \phi(\eqclass{1}) \ts \psi(\eqclass{2})< \varepsilon $ we analyze this by cases
\begin{itemize}
\item if $\A[\F] \models \phi(\eqclass{1}) < \psi(\eqclass{2})$
  \begin{align*}
   \iff & \exists r \text{ such that } \A[\F] \models \phi(\eqclass{1}) < r  \text { and } \A[\F] \models \psi(\eqclass{2})> r\\
\substack{\text{ind} \\ \iff} & \exists r \text{ such that } \exists U_1 \;\A \Vdash_{U_1} \phi(\sigma_1) < r  \text { and } \exists U_2 \; \A \Vdash_{U_2} \psi(\sigma_2)> r\\
\iff & \exists U \;\A \Vdash_U \phi(\sigma_1) <  \psi(\sigma_2)
  \end{align*}
\item If $\A[\F] \nvDash \phi(\eqclass{1}) < \psi(\eqclass{2})$ and $\A[\F] \nvDash \phi(\eqclass{1}) > \psi(\eqclass{2})$
  \begin{align*}
    \iff & \forall U \;\A \nVdash_U \phi(\sigma_1) <  \psi(\sigma_2) \text{ and } \forall U \;\A \nVdash_U \phi(\sigma_1) >  \psi(\sigma_2)
  \end{align*}
\item $\A \models  \phi(\eqclass{1}) > \psi(\eqclass{2})$ and $\A \models \phi(\eqclass{1}) < \psi(\eqclass{2})+\varepsilon$.
  \begin{multline*}
    \iff \exists U_1 \in \F \; \A \Vdash_{U_1}  \phi(\sigma_1) > \psi(\sigma_2) \text{ and }\\ \exists U_2 \in \F\; \A \Vdash_{U_2} \phi(\eqclass{1}) < \psi(\eqclass{2})+\varepsilon.
  \end{multline*}
\end{itemize}
\end{itemize}
Quantifiers
\begin{itemize}
\item
\begin{align*}
\A[\F] \models & \inf_{\eqclass{i}} \phi (\eqclass{i})< \varepsilon\\
&\iff  \exists \eqclass{1} \text{ such that }\A[\F]\models \phi(\eqclass{1})< \varepsilon\\
& \iff \exists U_1 \in \F \; \exists \sigma_1 \text{ such that }\A \Vdash_{U_1} \phi(\sigma_1)< \varepsilon\\
& \Rightarrow \exists U \in \F \text{ such that }\A \Vdash_{U} \inf_\sigma \phi(\sigma)< \varepsilon
\end{align*} 
For the other direction suppose that there exists $U \in \F$ such that $\A \Vdash_U \inf_\sigma \phi(\sigma)< \varepsilon$. Then the family $\mathcal{I}_\varepsilon= \{U \in \F | \A \Vdash_U \inf_\sigma \phi(\sigma)< \varepsilon \}$ is nonempty and can be partially ordered by the binary relation $\prec$ defined by: $U \prec V$ if and only if $U \supset V$. Consider the maximal element $U'$ of a chain defined in $\mathcal{I}_\varepsilon$.  Then there exists a covering $\{V_i\}$ of $U'$ all whose elements are basic open sets of class $\mathscr{C}$, and a family of sections $\{\mu_i\}$, such that $\A \Vdash_{V_i} \phi(\mu_i)< \varepsilon$. If any $V_i \in \F$ then $V_i=U'$. Otherwise it will contradict the maximality of $U'$. Also, if $\interior (X\setminus V_i) \in \F$ then $\A \Vdash_{\interior (X\setminus V_i) \cap U'} \inf_\sigma \phi(\sigma)< \varepsilon$ in contradiction to the maximality of $U'$. We conclude that there exists $\mu$ such that $\A \Vdash_{U'} \phi(\mu)< \varepsilon$.
\item  
\begin{align*}
\A[\F] \models & \sup_{\eqclass{i}} \phi (\eqclass{i})< \varepsilon\\
&\iff  \forall \eqclass{i} \; \A[\F]\models \phi(\eqclass{i})< \varepsilon\\
& \iff \forall \sigma_i \exists U_i \in \F \; \text{ such that }\A \Vdash_{U_i} \phi(\sigma_i)< \varepsilon
\end{align*} 
($\Rightarrow$) We prove this by contradiction. Suppose there exists $V$ in $\F$ such that for some $\sigma_i$ $\A \Vdash_V  \phi(\sigma_i) \geq \varepsilon $, then $V \cap U_i$ is also in $\F$ and in this set $\phi(\sigma_i) < \varepsilon$ and $\phi(\sigma_i) \geq \varepsilon$ are forced simultaneously.

($\Leftarrow$) Suppose that there exists $U \in \F$ such that $\A \Vdash_U \sup_\sigma \phi(\sigma)< \varepsilon$. Then the family $\mathcal{S}_\varepsilon= \{U \in \F | \A \Vdash_U \sup_\sigma \phi(\sigma)< \varepsilon \}$ is nonempty. The proof follows by similar arguments to those used in the case of $\inf_{\eqclass{i}} \phi (\eqclass{i})< \varepsilon$ above.
\end{itemize}
\end{proof}

We now stress that the Metric Generic Model Theorem (GMT) has distinct but strong connections with the Classical Theorem (see \cite{CAIC, FORE}).  In the case of the Metric GMT, we can observe similarities in the forcing definitions if we consider the parallelism between the minimum function and the disjunction, the maximum function  and the conjunction, the infimum and the existential quantifier. On the other hand, differences are evident if we compare the supremum with the universal quantifier. The reason for this is that in this case the sentence $1 \ts (1 \ts \phi)$, which is our analog for the double negation in continuous logic, is equivalent to the sentence $\phi$. Note that the point and local forcing definitions are consistent with this fact - i.e.,
\begin{align*}
\A \Vdash_U 1 \ts (1 \ts \phi)< \varepsilon \iff \A \Vdash_U \phi <  \varepsilon ,\\
\A \Vdash_U 1 \ts (1 \ts \phi)> \varepsilon \iff \A \Vdash_U \phi >  \varepsilon .
\end{align*}
As another consequence, the metric version of the GMT does not require an analog definition to the G\" odel translation. 

 We close this section by introducing a simple example that illustrates some of the elements just described. We study the metric sheaf for the continuous cyclic flow in a torus.

 Let $X= S^1$, $E= S^1 \times S^1$ and $p = \pi _1$, be the projection function onto the first component. Then, we have $E_q= S^1$. Given a set of local coordinates $x_i$ in $S_i$ and a smooth vector field $V$ on $E$, such that
 \begin{align*}
   V= V_1\frac{\partial }{\partial x_1}+V_2\frac{\partial}{\partial x_2}&&V_1(p) \neq 0 \;\; \forall p \in S^1,
 \end{align*}
 we can take as the set of sections the family of integrable curves of $V$. The open sets of the sheaf can be described as local streams through $E$. Complex multiplication in every fiber is continuously extended to a function between integral curves. Every section can be extended to a global section.

  Let us study the metric generic model of this sheaf. Note that $X$ is a topological regular space and that it admits an ultrafilter $\F$ of regular open sets. First, observe that $\A[\F]$ is a proper subset of the set of local integrable curves. In fact, every element in $\A [\F]$ can be described as the equivalence class of a global section in $E$:
For any element $[\sigma]\in  \A[\F]$, $U = dom (\sigma) \in \F$, and there exists a global integral curve $\mu$ in $E$ such that $\rho_\F(\sigma, \mu)=0$. This result leads to the conclusion that every ultrafilter filter of open sets in $S^1$ generates the same universe for $\A[\F]$. 
  Observe that every fiber can be made a metric structure with a metric given by the length of the shortest path joining two points. This, of course, is a Cauchy complete and bounded metric space. Dividing the distance function by $\pi$, we may redefine this to make $d(x,y)\leq 1$, for $x$ and $y$ in $S^1$. Therefore, this manifold is also a metric sheaf. In addition, observe that complex multiplication in $S^1$ extends to the sheaf as a uniformly continuous function in the set of sections.
  For any element $[\sigma] \in \A [\F]$, let $U= dom (\sigma )\in \F$ and $\mu$ be the global integral curve that extends $\sigma$. Thus, for arbitrary $\varepsilon >0$
\begin{align*}
  \A \Vdash _U d^\A(\sigma, \mu) < \varepsilon& &\text{ and as a consequence}& & \A[\F] \models d^{\A[\F]}([\sigma], [\mu]) = 0. 
\end{align*}
 In addition, the metric generic model satisfies the condition that the {\sl multiplication} between sections be left continuous. Let $\eta$ and $\mu$ be sections whose domain is an element of the ultrafilter. For any $\varepsilon < 1/2$, if 
\[
\A \Vdash _{\dom(\eta)\cap \dom(\mu)} d(\eta, \mu) < \varepsilon
\]
then for any other section $\sigma$ defined in an element of $\F$, it is true that in $V=\dom(\eta)\cap \dom(\mu)\cap \dom(\sigma)$ 
\begin{gather*}
\A \Vdash _V  d(\eta \sigma, \mu \sigma) < \varepsilon
\intertext{and also}
\A \Vdash _V  1 \ts \max (d(\eta, \mu), 1 \ts d(\eta \sigma, \mu \sigma)) < \varepsilon.
 \end{gather*}
By the metric GMT, we can conclude that
\[
\A[\F]\models 1 \ts \max (d^{\A[\F]}([\eta], [\mu]), 1 \ts d([\eta] [\sigma], [\mu] [\sigma])) < \varepsilon
\]
and since $\sigma, \eta$ and $\mu$ were chosen arbitrarily.
\[
\A[\F]\models \sup_\sigma \sup_\eta \sup_\mu \bigl[ 1 \ts \max (d^{\A[\F]}([\eta], [\mu]), 1 \ts d([\eta] [\sigma], [\mu] [\sigma]))\bigr] < \varepsilon .
\]
Right continuity, left invariance and right invariance of this metric can be expressed in the same fashion.

\section{Further directions}

Our theorem and constructions open up a new line of research with many
questions, mainly of two kinds: applications and model theory.

\subsection{Applications}

Besides our simple example, we expect there to be applications to
further \textbf{dynamical systems} obtained as sheaves of metric
structures, where the dynamics is provided by the behavior of (some
carefully chosen)
sections. More specifically, we expect that our results will be useful
for various constructions of sheaves over topological spaces naturally
associated to actions of compact groups over certain varieties - among
other variations. Additionally, the first author is currently working
in setting up applications to classical mechanics and possibly
quantization. Other possible applications include Zilber's Structural
Approximation (see~\cite{ZILB}). 

\subsection{Model Theory}

The Model Theoretic analysis of the new objects constructed here
(the metric generic model in section~\ref{secgenmodel} and the sheaf
of metric structures itself) has so far been analyzed from a model
theoretic perspective only up to the consequences of the Generic Model
Theorem (this is also true of the work done in the case of sheaves
with discrete First Order fibers). In particular, the stability theory
of sheaves, the construction of ``elementary'' extensions of these, of
sheaf theoretical versions of typespaces (typesheaves?) is quite
undeveloped so far (exceptions include the work of Montoya~\cite{MONT}
in some specific cases).

Additionally, extensions of this work to topological structures
(fibers with topological spaces, built in the framework of Flum and
Ziegler~\cite{FLUM}) have been explored and will appear in forthcoming
work of the two authors. Further lines of research (sheaves with
fibers that are more general than metric or different from topologic
in the style of Flum and Ziegler - for instance, fibers that are
measure algebras,
etc.) are yet to be explored. Finally, the connections to the Model
Theory of Metric Abstract Elementary Classes (see~\cite{HIRV}
and~\cite{ZAMB}) and in particular to topological dynamical
perspectives on typespaces, will provide relevant directions when
combined with our metric sheaves.

\bibliographystyle{jloganal}
\bibliography{sheaf}
\end{document}